\def\y{{\bf y}}
\def\x{{\bf x}}
\def\x{{\mathbf x}}
\def\x{{\bf x}}
\def\y{{\bf y}}
\def\q{{\bf q}}
\def\h{{\bf h}}
\def\be{\begin{equation}}
\def\ee{\end{equation}}
\def\ba{\left[\begin{array}}
\def\ea{\end{array}\right]}
\def\x{{\bf x}}
\def\y{{\bf y}}
\def\q{{\bf q}}
\def\1{{\bf 1}}
\def\g{{\bf g}}
\def\0{{\bf 0}}
\newtheorem{theorem}{Theorem}
\newtheorem{lemma}{Lemma}
\begin{document}

\begin{singlespace}

\title {Another look at the Gardner problem 
}
\author{
\textsc{Mihailo Stojnic}
\\
\\
{School of Industrial Engineering}\\
{Purdue University, West Lafayette, IN 47907} \\
{e-mail: {\tt mstojnic@purdue.edu}} }
\date{}
\maketitle

\centerline{{\bf Abstract}} \vspace*{0.1in}

In this paper we revisit one of the classical perceptron problems from the neural networks and statistical physics. In \cite{Gar88} Gardner presented a neat statistical physics type of approach for analyzing what is now typically referred to as the Gardner problem. The problem amounts to discovering a statistical behavior of a spherical perceptron. Among various quantities \cite{Gar88} determined the so-called storage capacity of the corresponding neural network and analyzed its deviations as various perceptron parameters change. In a more recent work \cite{SchTir02,SchTir03} many of the findings of \cite{Gar88} (obtained on the grounds of the statistical mechanics replica approach) were proven to be mathematically correct. In this paper, we take another look at the Gardner problem and provide a simple alternative framework for its analysis. As a result we reprove many of now known facts and rigorously reestablish a few other results.

\vspace*{0.25in} \noindent {\bf Index Terms: Gardner problem; storage capacity}.

\end{singlespace}

\section{Introduction}
\label{sec:back}

In this paper we will revisit a classical perceptron type of problem from neural networks and statistical physics/mechanics. A great deal of the problem's popularity has its roots in a nice work \cite{Gar88}. Hence, to describe the problem we will closely follow what was done in \cite{Gar88}. We start with the following dynamics:
\begin{equation}
H_{ik}^{(t+1)}=\mbox{sign} (\sum_{j=1,j\neq k}^{n}H_{ij}^{(t)}X_{jk}-T_{ik}).\label{eq:defdyn}
\end{equation}
Following \cite{Gar88} for any fixed $1\leq i\leq m$ we will call each $H_{ij},1 \leq j\leq n $, the icing spin, i.e. $H_{ij}\in\{-1,1\},\forall i,j$. Following \cite{Gar88} further we will call $X_{jk},1\leq j\leq n$, the interaction strength for the bond from site $j$ to site $i$. $T_{ik},1\leq i\leq m,1\leq k\leq n$, will be the threshold for site $k$ in pattern $i$ (we will typically assume that $T_{ik}=0$; however all the results we present below can be modified easily so that they include scenarios where $T_{ik}\neq 0$).

Now, the dynamics presented in (\ref{eq:defdyn}) works by moving from a $t$ to $t+1$ and so on (of course one assumes an initial configuration for say $t=0$). Moreover, the above dynamics will have a fixed point if say there are strengths $X_{jk},1\leq j\leq n,1\leq k\leq m$, such that for any $1\leq i\leq m$
\begin{eqnarray}
& & H_{ik}\mbox{sign} (\sum_{j=1,j\neq k}^{n}H_{ij}X_{jk}-T_{ik})=1\nonumber \\
& \Leftrightarrow & H_{ik}(\sum_{j=1,j\neq k}^{n}H_{ij}X_{jk}-T_{ik})>0,1\leq j\leq n,1\leq k\leq n.\label{eq:defdynfp}
\end{eqnarray}
Now, of course this is a well known property of a very general class of dynamics. In other words, unless one specifies the interaction strengths the generality of the problem essentially makes it easy. In \cite{Gar88} then proceeded and considered the spherical restrictions on $X$. To be more specific the restrictions considered in \cite{Gar88} amount to the following constraints
\begin{equation}
\sum_{j=1}^{n}X_{ji}^2=1,1\leq i\leq n.\label{eq:cosntX}
\end{equation}
Then the fundamental question that was considered in \cite{Gar88} is the so-called storage capacity of the above dynamics or alternatively a neural network that it would represent. Namely, one then asks how many patterns $m$ ($i$-th pattern being $H_{ij},1\leq j\leq n$) one can store so that there is an assurance that they are stored in a stable way. Moreover, since having patterns being fixed points of the above introduced dynamics is not enough to insure having a finite basin of attraction one often may impose a bit stronger threshold condition
\begin{eqnarray}
& & H_{ik}\mbox{sign} (\sum_{j=1,j\neq k}^{n}H_{ij}X_{jk}-T_{ik})=1\nonumber \\
& \Leftrightarrow & H_{ik}(\sum_{j=1,j\neq k}^{n}H_{ij}X_{jk}-T_{ik})>\kappa,1\leq j\leq n,1\leq k\leq n,\label{eq:defdynfpstr}
\end{eqnarray}
where typically $\kappa$ is a positive number.

In \cite{Gar88} a replica type of approach was designed and based on it a characterization of the storage capacity was presented. Before showing what exactly such a characterization looks like we will first formally define it. Namely, throughout the paper we will assume the so-called linear regime, i.e. we will consider the so-called \emph{linear} scenario where the length and the number of different patterns, $n$ and $m$, respectively are large but proportional to each other. Moreover, we will denote the proportionality ratio by $\alpha$ (where $\alpha$ obviously is a constant independent of $n$) and will set
\begin{equation}
m=\alpha n.\label{eq:defmnalpha}
\end{equation}
Now, assuming that $H_{ij},1\leq i\leq m,1\leq j\leq n$, are i.i.d. symmetric Bernoulli random variables, \cite{Gar88} using the replica approach gave the following estimate for $\alpha$ so that (\ref{eq:defdynfpstr}) holds with overwhelming probability (under overwhelming probability we will in this paper assume a probability that is no more than a number exponentially decaying in $n$ away from $1$)
\begin{equation}
\alpha_c(\kappa)=(\frac{1}{\sqrt{2\pi}}\int_{-\kappa}^{\infty}(z+\kappa)^2e^{-\frac{z^2}{2}}dz)^{-1}.\label{eq:garstorcap}
\end{equation}
Based on the above characterization one then has that $\alpha_c$ achieves its maximum over positive $\kappa$'s as $\kappa\rightarrow 0$. One in fact easily then has
\begin{equation}
\lim_{\kappa\rightarrow 0}\alpha_c(\kappa)=2.\label{eq:garstorcapk0}
\end{equation}
The result given in (\ref{eq:garstorcapk0}) is of course well known and has been rigorously established either as a pure mathematical fact or even in the context of neural networks and pattern recognition \cite{Schlafli,Cover65,Winder,Winder61,Wendel62,Cameron60,Joseph60,BalVen87,Ven86}. In a more recent work \cite{SchTir02,SchTir03} the authors also considered the Gardner problem and established that (\ref{eq:garstorcap}) also holds.

Of course, a whole lot more is known about the model (or its different variations) that we described above and will study here. All of our results will of course easily translate to these various scenarios. Instead of mentioning all of these applications here we in this introductory paper chose to present the key components of our mechanism on the most basic (and probably most widely known) problem. All other applications we will present in several forthcoming papers.

Also, we should mentioned that many variants of the model that we study here are possible from a purely mathematical perspective. However, many of them have found applications in various other fields as well. For example, a great set of references that contains a collection of results related to various aspects of neural networks and their bio-applications is  \cite{AgiAnnBarCooTan13a,AgiAnnBarCooTan13b,AgiBarBarGalGueMoa12,AgiBarGalGueMoa12,AgiAstBarBurUgu12}.

As mentioned above, in this paper we will take another look at the above described storage capacity problem. We will provide a relatively simple alternative framework to characterize it. However, before proceeding further with the presentation we will just briefly sketch how the rest of the paper will be organized. In Section \ref{sec:uncorgard} we will present the main ideas behind the mechanism that we will use to study the storage capacity problem. This will be done in the so-called uncorrelated case, i.e. when no correlations are assumed among patterns. In the last part of Section \ref{sec:uncorgard}, namely, Subsection \ref{sec:negkappa} we will then present a few results related to a bit harder version of a mathematical problem arising in the analysis of the storage capacity. Namely, we will consider validity of fixed point inequalities (\ref{eq:defdynfp}) when $\kappa<0$. In Section \ref{sec:corgard} we will then show the corresponding results when the patterns are correlated in a certain way. Finally, in Section \ref{sec:conc} we will provide a few concluding remarks.

\section{Uncorrelated Gardner problem}
\label{sec:uncorgard}

In this section we look at the so-called uncorrelated case of the above described Gardner problem. In fact, such a case is precisely what we described in the previous section. Namely, we will assume that all patterns $H_{i,1:n},1\leq i\leq m$, are uncorrelated ($H_{i,1:n}$ stands for vector $[H_{i1},H_{i2},\dots,H_{in}]$). Now, to insure that we have the targeted problem stated clearly we restate it again. Let $\alpha=\frac{m}{n}$ and assume that $H$ is an $m\times n$ matrix with i.i.d. $\{-1,1\}$ Bernoulli entries. Then the question of interest is: assuming that $\|\x\|_2=1$, how large $\alpha$ can be so that the following system of linear inequalities is satisfied with overwhelming probability
\begin{equation}
H\x\geq \kappa.\label{eq:defprobucor}
\end{equation}
This of course is the same as if one asks how large $\alpha$ can be so that the following optimization problem is feasible with overwhelming probability
\begin{eqnarray}
& & H\x\geq \kappa\nonumber \\
& & \|\x\|_2=1.\label{eq:defprobucor1}
\end{eqnarray}
To see that (\ref{eq:defprobucor}) and (\ref{eq:defprobucor1}) indeed match the above described fixed point condition it is enough to observe that due to statistical symmetry one can assume $H_{i1}=1,1\leq i\leq m$. Also the constraints essentially decouple over the columns of $X$ (so one can then think of $\x$ in (\ref{eq:defprobucor}) and (\ref{eq:defprobucor1}) as one of the columns of $X$). Moreover, the dimension of $H$ in (\ref{eq:defprobucor}) and (\ref{eq:defprobucor1}) should be changed to $m\times (n-1)$; however, since we will consider a large $n$ scenario to make writing easier we keep dimension as $m\times n$.

Now, it is rather clear but we do mention that the overwhelming probability statement is taken with respect to the randomness of $H$. To analyze the feasibility of (\ref{eq:defprobucor1}) we will rely on a mechanism we recently developed for studying various optimization problems in \cite{StojnicRegRndDlt10}. Such a mechanism works for various types of randomness. However, the easiest way to present it is assuming that the underlying randomness is standard normal. So to fit the feasibility of (\ref{eq:defprobucor1}) into the framework of \cite{StojnicRegRndDlt10} we will need matrix $H$ to be comprised of i.i.d. standard normals. We will hence without loss of generality in the remainder of this section assume that elements of matrix $H$ are indeed i.i.d. standard normals (towards the end of the paper we will briefly mention why such an assumption changes nothing in the validity of the results; also, more on this topic can be found in e.g. \cite{StojnicHopBnds10,StojnicLiftStrSec13,StojnicRegRndDlt10} where we discussed it a bit further).

Now, going back to problem (\ref{eq:defprobucor1}), we first recognize that it can be rewritten as the following optimization problem
\begin{eqnarray}
\xi_n=\min_{\x} \max_{\lambda\geq 0} & &  \kappa\lambda^T\1- \lambda^T H\x \nonumber \\
\mbox{subject to} & & \|\lambda\|_2= 1\nonumber \\
& & \|\x\|_2=1,\label{eq:uncorminmax}
\end{eqnarray}
where $\1$ is an $m$-dimensional column vector of all $1$'s. Clearly, if $\xi_n\leq 0$ then (\ref{eq:defprobucor1}) is feasible. On the other hand, if $\xi_n>0$ then (\ref{eq:defprobucor1}) is not feasible. That basically means that if we can probabilistically characterize the sign of $\xi_n$ then we could have a way of determining $\alpha$ such that $\xi_n\leq 0$. Below, we provide a way that can be used to characterize $\xi_n$. We do so by relying on the strategy developed in \cite{StojnicRegRndDlt10,StojnicGorEx10} and ultimately on the following set of results from \cite{Gordon85,Gordon88}.
\begin{theorem}(\cite{Gordon88,Gordon85})
\label{thm:Gordonmesh1} Let $X_{ij}$ and $Y_{ij}$, $1\leq i\leq n,1\leq j\leq m$, be two centered Gaussian processes which satisfy the following inequalities for all choices of indices
\begin{enumerate}
\item $E(X_{ij}^2)=E(Y_{ij}^2)$
\item $E(X_{ij}X_{ik})\geq E(Y_{ij}Y_{ik})$
\item $E(X_{ij}X_{lk})\leq E(Y_{ij}Y_{lk}), i\neq l$.
\end{enumerate}
Then
\begin{equation*}
P(\bigcap_{i}\bigcup_{j}(X_{ij}\geq \lambda_{ij}))\leq P(\bigcap_{i}\bigcup_{j}(Y_{ij}\geq \lambda_{ij})).
\end{equation*}
\end{theorem}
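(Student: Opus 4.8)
\medskip
\noindent\textbf{Proof plan.} The plan is to prove Theorem~\ref{thm:Gordonmesh1} by the classical Gaussian interpolation (Slepian-type) technique: the comparison of the two probabilities is reduced to a comparison of covariances after replacing the indicator of the event $\bigcap_i\bigcup_j(z_{ij}\geq\lambda_{ij})$ by a smooth surrogate. First I would fix a smooth nonincreasing function $h_\epsilon:\mathbb{R}\to(0,1)$ with $h_\epsilon(-\infty)=1$, $h_\epsilon(+\infty)=0$ and bounded derivatives of all orders (for instance $h_\epsilon(x)=1-\Phi(x/\epsilon)$), so that $h_\epsilon(z-\lambda)$ approximates $\mathbf{1}[z<\lambda]$ as $\epsilon\downarrow 0$. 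Since $\bigcup_j(z_{ij}\geq\lambda_{ij})$ is the complement of $\bigcap_j(z_{ij}<\lambda_{ij})$, the natural surrogate for the whole event is
\[
F(z)=\prod_{i}\Bigl(1-\prod_{j}h_\epsilon(z_{ij}-\lambda_{ij})\Bigr),
\]
which lies in $[0,1]$, has bounded partial derivatives of every order, and converges pointwise to $\mathbf{1}[\bigcap_i\bigcup_j(z_{ij}\geq\lambda_{ij})]$ off the set where some coordinate equals its threshold.

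Next I would realize $X$ and $Y$ on a common probability space as \emph{independent} centered Gaussian families (harmless, since only their marginal laws enter the statement), set $Z_{ij}(t)=\sqrt{t}\,X_{ij}+\sqrt{1-t}\,Y_{ij}$, and put $\phi(t)=E[F(Z(t))]$, so that the assertion becomes $\phi(1)\leq\phi(0)$. Differentiating under the expectation and applying the Gaussian integration-by-parts (Stein) identity to $E[X_{ij}\,\partial_{z_{ij}}F(Z(t))]$ and to $E[Y_{ij}\,\partial_{z_{ij}}F(Z(t))]$, together with the independence of $X$ and $Y$, should give
\[
\phi'(t)=\frac12\sum_{(i,j)\neq(l,k)}\bigl(E[X_{ij}X_{lk}]-E[Y_{ij}Y_{lk}]\bigr)\,E\bigl[\partial_{z_{lk}}\partial_{z_{ij}}F(Z(t))\bigr],
\]
the diagonal terms dropping out by the first hypothesis of the theorem.

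The crux of the argument, and the step I expect to be the main obstacle, is the sign analysis of the mixed second partials of $F$, which has to dovetail with hypotheses~2 and~3. Using $h_\epsilon'\leq 0$, $h_\epsilon\in(0,1)$, and hence each row factor $1-\prod_j h_\epsilon(\cdot)\in(0,1)$, a direct computation should show that $\partial_{z_{ik}}\partial_{z_{ij}}F\leq 0$ whenever $j\neq k$ index the same row $i$: such a derivative produces a product containing both $-h_\epsilon'(z_{ij}-\lambda_{ij})\geq 0$ and $h_\epsilon'(z_{ik}-\lambda_{ik})\leq 0$, all remaining factors being nonnegative. On the other hand $\partial_{z_{lk}}\partial_{z_{ij}}F\geq 0$ whenever $i\neq l$, since it differentiates two distinct row factors, each of which is nonnegative and coordinatewise nondecreasing, leaving the other factors nonnegative. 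Combined with $E[X_{ij}X_{ik}]-E[Y_{ij}Y_{ik}]\geq 0$ (hypothesis~2) and $E[X_{ij}X_{lk}]-E[Y_{ij}Y_{lk}]\leq 0$ for $i\neq l$ (hypothesis~3), every summand in the expression for $\phi'(t)$ is $\leq 0$; hence $\phi$ is nonincreasing and $E[F(X)]=\phi(1)\leq\phi(0)=E[F(Y)]$.

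Finally, letting $\epsilon\downarrow 0$ and invoking dominated convergence — legitimate because $0\leq F\leq 1$ and, the relevant Gaussians having no atoms, $F(Z)\to\mathbf{1}[\bigcap_i\bigcup_j(Z_{ij}\geq\lambda_{ij})]$ almost surely for $Z\in\{X,Y\}$ — upgrades the last inequality to $P(\bigcap_i\bigcup_j(X_{ij}\geq\lambda_{ij}))\leq P(\bigcap_i\bigcup_j(Y_{ij}\geq\lambda_{ij}))$. The remaining technical points — justifying the differentiation under the integral, the Gaussian integration-by-parts step, and the reduction to independent $X$ and $Y$ — are routine given the uniform boundedness of $F$ and of its first two partial derivatives.
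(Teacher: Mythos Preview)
Your proposal is correct and is precisely the standard Gaussian--interpolation proof of Gordon's min--max comparison inequality: the surrogate $F$, the interpolation $Z(t)=\sqrt{t}\,X+\sqrt{1-t}\,Y$, the integration-by-parts identity, and the sign analysis of the mixed second partials of $F$ are all exactly as in the classical argument, and your bookkeeping of signs (same-row second partials $\leq 0$ paired with hypothesis~2; different-row second partials $\geq 0$ paired with hypothesis~3) is accurate.

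There is nothing to compare against in the paper itself: Theorem~\ref{thm:Gordonmesh1} is stated with a citation to \cite{Gordon88,Gordon85} and is \emph{not} proved in the paper --- it is imported as a black-box tool and then applied through Lemmas~\ref{lemma:negproblemma}--\ref{lemma:negproblemmaubcor}. Your write-up therefore supplies what the paper deliberately omits, and it does so via the same route Gordon used. The only cosmetic point is that the ``no atoms'' step at the end tacitly assumes each coordinate $X_{ij}$, $Y_{ij}$ has nonzero variance; if you want to cover the degenerate case you can add an independent $\epsilon$-Gaussian perturbation and let $\epsilon\downarrow 0$ at the very end, but this is standard and does not affect the argument.
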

The following, more simpler, version of the above theorem relates to the expected values.
\begin{theorem}(\cite{Gordon85,Gordon88})
\label{thm:Gordonmesh2} Let $X_{ij}$ and $Y_{ij}$, $1\leq i\leq n,1\leq j\leq m$, be two centered Gaussian processes which satisfy the following inequalities for all choices of indices
\begin{enumerate}
\item $E(X_{ij}^2)=E(Y_{ij}^2)$
\item $E(X_{ij}X_{ik})\geq E(Y_{ij}Y_{ik})$
\item $E(X_{ij}X_{lk})\leq E(Y_{ij}Y_{lk}), i\neq l$.
\end{enumerate}
Then
\begin{equation*}
E(\min_{i}\max_{j}(X_{ij}))\leq E(\min_i\max_j(Y_{ij})).
\end{equation*}
\end{theorem}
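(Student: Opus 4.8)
The statement to prove is the "simpler version" Theorem~\ref{thm:Gordonmesh2}, the expectation-form of Gordon's comparison inequality, given that Theorem~\ref{thm:Gordonmesh1} (the probability-form) is already available.

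\medskip

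\textbf{Plan.} The natural route is to derive the expectation inequality from the probability inequality via the layer-cake (tail-integral) representation of the expectation of a random variable. Concretely, set $U = \min_i\max_j X_{ij}$ and $V = \min_i\max_j Y_{ij}$. For a real-valued random variable $W$ one has
\begin{equation*}
E(W) = \int_0^\infty P(W \geq t)\,dt - \int_{-\infty}^0 P(W \leq t)\,dt,
\end{equation*}
so it suffices to compare $P(U \geq t)$ with $P(V \geq t)$ for every real threshold $t$, in the right direction, and similarly for the lower tails. First I would observe that the event $\{U \geq t\} = \{\min_i\max_j X_{ij} \geq t\} = \bigcap_i\{\max_j X_{ij}\geq t\} = \bigcap_i\bigcup_j\{X_{ij}\geq t\}$, which is exactly the event appearing in Theorem~\ref{thm:Gordonmesh1} with the constant choice $\lambda_{ij}=t$ for all $i,j$. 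Hence Theorem~\ref{thm:Gordonmesh1} immediately gives $P(U\geq t)\leq P(V\geq t)$ for every $t$.

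\medskip

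\textbf{Key steps, in order.} (1) Reduce $E(U)\leq E(V)$ to a pointwise tail comparison using the layer-cake formula above; here one should note that both $U$ and $V$ are a.s. finite (indeed they have finite expectation, being finite min-max combinations of Gaussians), so the integrals are well defined. (2) Rewrite the upper-tail event of a min-of-max as an intersection-of-union event and invoke Theorem~\ref{thm:Gordonmesh1} with $\lambda_{ij}\equiv t$ to get $P(U\geq t)\leq P(V\geq t)$ for all $t\in\mathbb{R}$. (3) Handle the lower-tail term: here the cleanest trick is to apply the already-established result to the negated processes. Set $\tilde X_{ij} = -X_{ij}$, $\tilde Y_{ij}=-Y_{ij}$. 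These are still centered Gaussian; the variance condition (1) is unchanged, and conditions (2)--(3) are also unchanged since $E(\tilde X_{ij}\tilde X_{ik}) = E(X_{ij}X_{ik})$ and likewise for the cross terms — negation does not flip covariances of products of the \emph{same} sign. Wait: actually one must be careful, because the min/max get swapped under negation, so one does not directly get what is needed for the lower tail of $U$. Instead, for the lower-tail term it is better to proceed directly: $\{U\leq t\} = \{\min_i\max_j X_{ij}\leq t\} = \bigcup_i\{\max_j X_{ij}\leq t\} = \bigcup_i\bigcap_j\{X_{ij}\leq t\}$, and I would either (a) apply Theorem~\ref{thm:Gordonmesh1} to $\tilde X,\tilde Y$ after also relabeling so that the roles of min and max over the two index sets are interchanged — which is legitimate because conditions (1)--(3) are symmetric in exactly the way needed — or (b) bypass the lower-tail term entirely by the shifting argument below.

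\medskip

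\textbf{Cleanest completion and the main obstacle.} The slickest way to avoid fussing with the lower tail separately is: fix any constant $c$ and note $U + c = \min_i\max_j (X_{ij}+c)$, and the shifted processes $X_{ij}+c$ are no longer centered, so Theorem~\ref{thm:Gordonmesh1} as stated does not literally apply to them. So instead I would use the bounded-truncation route: for $c>0$ large, $E(U) = \lim_{c\to\infty}\big(\int_{-c}^{0} (P(U\geq t)-1)\,dt + \int_0^{c} P(U\geq t)\,dt\big)$ after rewriting $E(U) = \int_0^\infty P(U\geq t)dt - \int_{-\infty}^0 (1-P(U\geq t))dt = \int_0^\infty P(U\geq t)dt - \int_{-\infty}^0 P(U< t)dt$; since $P(U< t) = 1 - P(U\geq t) \geq 1 - P(V\geq t) = P(V< t)$, the subtracted term for $U$ is $\geq$ that for $V$, and combined with $P(U\geq t)\leq P(V\geq t)$ on the positive axis we get $E(U)\leq E(V)$ directly, provided we can justify that all four integrals converge — which follows from Gaussian concentration giving $P(|U|\geq t), P(|V|\geq t) = O(e^{-ct^2})$. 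I expect the only real subtlety — the "main obstacle," such as it is — to be this integrability/finiteness bookkeeping: confirming $U,V$ are integrable and that the layer-cake manipulation is valid with the inequality going the correct way on both the positive and negative half-lines. The Gaussian-comparison content itself is entirely carried by Theorem~\ref{thm:Gordonmesh1}; everything else is measure-theoretic routine.
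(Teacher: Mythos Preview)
Your argument is correct. The paper itself does not prove Theorem~\ref{thm:Gordonmesh2}; it is simply quoted from Gordon's work \cite{Gordon85,Gordon88} without proof, so there is no ``paper's own proof'' to compare against. Your derivation of the expectation form from the probability form (Theorem~\ref{thm:Gordonmesh1}) via the tail-integral representation is a standard and valid route.

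One remark on presentation: the detours through negated processes and shifted (non-centered) processes are unnecessary. Once you have established $P(U\geq t)\leq P(V\geq t)$ for every real $t$ (which, as you correctly observe, is immediate from Theorem~\ref{thm:Gordonmesh1} with $\lambda_{ij}\equiv t$ and the identity $\{\min_i\max_j X_{ij}\geq t\}=\bigcap_i\bigcup_j\{X_{ij}\geq t\}$), you have full stochastic domination $U\preceq_{\mathrm{st}} V$, and this alone yields $E(U)\leq E(V)$ for any integrable $U,V$. There is no need to treat the positive and negative half-lines by separate devices; the single inequality $P(U\geq t)\leq P(V\geq t)$ automatically gives $P(U<t)\geq P(V<t)$, which is exactly what you eventually use in your ``cleanest completion'' paragraph. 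Integrability is immediate since $U$ and $V$ are finite min--max combinations of Gaussians.
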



We will split the rest of the presentation in this section into two subsections. First we will provide a mechanism that can be used to characterize a lower bound on $\xi_n$. After that we will provide its a counterpart that can be used to characterize an upper bound on a quantity similar to $\xi_n$ which has the same sign as $\xi_n$.

\subsection{Lower-bounding $\xi_n$}
\label{sec:uncorgardlb}

We will make use of Theorem \ref{thm:Gordonmesh1} through the following lemma (the lemma is  of course a direct consequence of Theorem \ref{thm:Gordonmesh1} and in fact is fairly similar to Lemma 3.1 in \cite{Gordon88}, see also \cite{StojnicHopBnds10} for similar considerations).
\begin{lemma}
Let $H$ be an $m\times n$ matrix with i.i.d. standard normal components. Let $\g$ and $\h$ be $m\times 1$ and $n\times 1$ vectors, respectively, with i.i.d. standard normal components. Also, let $g$ be a standard normal random variable and let $\zeta_{\lambda}$ be a function of $\x$. Then
\begin{equation}
P(\min_{\|\x\|_2=1}\max_{\|\lambda\|_2=1,\lambda_i\geq 0}(-\lambda^T H\x+g-\zeta_{\lambda})\geq 0)\geq
P(\min_{\|\x\|_2=1}\max_{\|\lambda\|_2=1,\lambda_i\geq 0}(\g^T\lambda+\h^T\x-\zeta_{\lambda})\geq 0).\label{eq:negproblemma}
\end{equation}\label{lemma:negproblemma}
\end{lemma}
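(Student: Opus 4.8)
The plan is to apply Gordon's comparison inequality (Theorem \ref{thm:Gordonmesh1}) to two carefully chosen centered Gaussian processes indexed by the pair $(\x,\lambda)$ ranging over the product of the unit sphere and the nonnegative part of the unit sphere. Define
\begin{equation}
X_{\x,\lambda}=-\lambda^T H\x+g,\qquad Y_{\x,\lambda}=\g^T\lambda+\h^T\x,
\end{equation}
where $H$, $\g$, $\h$, $g$ are all independent with i.i.d. standard normal entries. First I would compute the relevant second moments. One checks $E(X_{\x,\lambda}^2)=\|\x\|_2^2\|\lambda\|_2^2+1=2$ on the constraint set, and $E(Y_{\x,\lambda}^2)=\|\lambda\|_2^2+\|\x\|_2^2=2$, so condition 1 holds. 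For the cross terms, $E(X_{\x,\lambda}X_{\x,\bar\lambda})=(\lambda^T\bar\lambda)(\x^T\x)+1=\lambda^T\bar\lambda+1$ (same $\x$, varying $\lambda$), while $E(Y_{\x,\lambda}Y_{\x,\bar\lambda})=\lambda^T\bar\lambda+1$; and for distinct $\x\neq\bar\x$, $E(X_{\x,\lambda}X_{\bar\x,\bar\lambda})=(\lambda^T\bar\lambda)(\x^T\bar\x)+1$ while $E(Y_{\x,\lambda}Y_{\bar\x,\bar\lambda})=\lambda^T\bar\lambda+\x^T\bar\x$. So verifying conditions 2 and 3 reduces to the elementary inequality: for unit vectors, $ab+1\geq a+b$ when $a,b\in[-1,1]$ (equivalently $(1-a)(1-b)\geq0$), which gives condition 2 with equality actually holding, and for condition 3 one needs $(\lambda^T\bar\lambda)(\x^T\bar\x)+1\leq \lambda^T\bar\lambda+\x^T\bar\x$, i.e. $(1-\lambda^T\bar\lambda)(1-\x^T\bar\x)\geq0$, which again holds since both factors are nonnegative. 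This is the standard ``double sphere'' Gordon setup.

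Next I would apply Theorem \ref{thm:Gordonmesh1} in the form $P(\bigcap_i\bigcup_j(X_{ij}\geq\lambda_{ij}))\leq P(\bigcap_i\bigcup_j(Y_{ij}\geq\lambda_{ij}))$. Here the role of the outer index $i$ is played by $\x$ (the $\min$ variable) and the inner index $j$ by $\lambda$ (the $\max$ variable), and the thresholds $\lambda_{ij}$ are taken to be $\zeta_\lambda$ (which depends only on $\lambda$, hence is an admissible choice of threshold). Writing the event $\bigcap_\x\bigcup_\lambda(X_{\x,\lambda}\geq\zeta_\lambda)$ as $\{\min_\x\max_\lambda(X_{\x,\lambda}-\zeta_\lambda)\geq0\}$, Theorem \ref{thm:Gordonmesh1} yields
\begin{equation}
P(\min_{\|\x\|_2=1}\max_{\|\lambda\|_2=1,\lambda_i\geq0}(X_{\x,\lambda}-\zeta_\lambda)\geq0)\leq P(\min_{\|\x\|_2=1}\max_{\|\lambda\|_2=1,\lambda_i\geq0}(Y_{\x,\lambda}-\zeta_\lambda)\geq0),
\end{equation}
which is precisely (\ref{eq:negproblemma}) once we substitute the definitions of $X$ and $Y$: the right-hand process gives $\g^T\lambda+\h^T\x-\zeta_\lambda$ and the left-hand process gives $-\lambda^T H\x+g-\zeta_\lambda$. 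Note the direction works out correctly: the $X$-process (containing $H$) appears on the \emph{smaller} side, matching the statement. The subtlety I flagged before — that replacing $H$ by $-H$ does not flip an inequality — does not arise here because we are not using such a substitution; we apply Theorem \ref{thm:Gordonmesh1} directly with $X$ being the $H$-process and $Y$ being the $(\g,\h)$-process, and the covariance inequalities 2 and 3 come out in exactly the orientation the theorem requires.

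I would expect the main obstacle to be purely a matter of careful bookkeeping rather than depth: one must make sure the index roles ($\x$ outer, $\lambda$ inner), the threshold's admissibility ($\zeta_\lambda$ independent of $\x$), and especially the orientation of the covariance comparisons are all consistent with the precise hypotheses of Theorem \ref{thm:Gordonmesh1}. A secondary technical point is that Theorem \ref{thm:Gordonmesh1} as quoted is stated for finite index sets, whereas here $\x$ and $\lambda$ range over continua; the standard remedy is to first establish the inequality on finite nets, invoking continuity of the Gaussian processes and monotone/dominated convergence to pass to the supremum/infimum over the spheres, exactly as in Lemma 3.1 of \cite{Gordon88}. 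I would remark that this discretization step changes nothing essential and reference that lemma for the details.
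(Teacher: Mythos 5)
Your identification of the two Gaussian processes and the computation of their second moments is essentially right, but you have the roles of $X$ and $Y$ reversed, and a hidden algebra slip masks this. With your assignment ($X_{\x,\lambda}=-\lambda^T H\x+g$, $Y_{\x,\lambda}=\g^T\lambda+\h^T\x$), Theorem~\ref{thm:Gordonmesh1} would conclude
\begin{equation*}
P\Bigl(\min_{\x}\max_{\lambda}(-\lambda^T H\x+g-\zeta_\lambda)\geq 0\Bigr)\leq
P\Bigl(\min_{\x}\max_{\lambda}(\g^T\lambda+\h^T\x-\zeta_\lambda)\geq 0\Bigr),
\end{equation*}
which is the \emph{opposite} of (\ref{eq:negproblemma}). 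Your remark that ``the $X$-process appears on the smaller side, matching the statement'' is not correct: in the lemma the $H$-process is on the \emph{larger} side of the inequality. Moreover, with your assignment condition~3 of Theorem~\ref{thm:Gordonmesh1} actually \emph{fails}. Setting $a=\lambda^T\bar\lambda$, $b=\x^T\bar\x$, condition~3 would demand $ab+1\leq a+b$, i.e.\ $(1-a)(1-b)\leq 0$; but since $a,b\in[-1,1]$ we have $(1-a)(1-b)\geq 0$, i.e.\ $ab+1\geq a+b$. Your step ``$ab+1\leq a+b \iff (1-a)(1-b)\geq 0$'' has the sign of the equivalence backwards, which is what made the argument appear to close.

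The fix is simply to swap the labels: take $X_{\x,\lambda}=\g^T\lambda+\h^T\x$ (the decoupled process) and $Y_{\x,\lambda}=-\lambda^T H\x+g$ (the bilinear process with the ghost variable $g$). Then condition~1 holds as you computed, condition~2 holds with equality (same-$\x$ covariances are both $\lambda^T\bar\lambda+1$), and condition~3 reads $\lambda^T\bar\lambda+\x^T\bar\x\leq(\lambda^T\bar\lambda)(\x^T\bar\x)+1$, which is exactly $(1-a)(1-b)\geq 0$ with $a,b\in[-1,1]$ and genuinely holds. Theorem~\ref{thm:Gordonmesh1} then yields $P(X\text{-side})\leq P(Y\text{-side})$, i.e.\ precisely (\ref{eq:negproblemma}). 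The rest of your discussion (identifying $\x$ as the outer index, $\lambda$ as the inner index, $\zeta_\lambda$ as an admissible threshold depending only on the inner index, and the finite-net discretization) is fine and matches the paper's appeal to Gordon's Lemma~3.1.
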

\begin{proof}
The proof is basically similar to the proof of Lemma 3.1 in \cite{Gordon88} as well as to the proof of Lemma 7 in \cite{StojnicHopBnds10}. The only difference is the structure of the allowed set of $\lambda$'s. Such a difference changes nothing structurally in the proof, though.
\end{proof}

Let $\zeta_{\lambda}=-\kappa\lambda^T\1+\epsilon_{5}^{(g)}\sqrt{n}+\xi_n^{(l)}$ with $\epsilon_{5}^{(g)}>0$ being an arbitrarily small constant independent of $n$. We will first look at the right-hand side of the inequality in (\ref{eq:negproblemma}). The following is then the probability of interest
\begin{equation}
P(\min_{\|\x\|_2=1}\max_{\|\lambda\|_2=1,\lambda_i\geq 0}(\g^T\lambda+\h^T\x+\kappa\lambda^T\1-\epsilon_{5}^{(g)}\sqrt{n})\geq \xi_n^{(l)}).\label{eq:negprobanal0}
\end{equation}
After solving the minimization over $\x$ and the maximization over $\lambda$ one obtains
\begin{equation}
\hspace{-.3in}P(\min_{\|\x\|_2=1}\max_{\|\lambda\|_2=1,\lambda_i\geq 0}(\g^T\lambda+\h^T\x+\kappa\lambda^T\1-\epsilon_{5}^{(g)}\sqrt{n})\geq \xi_n^{(l)})=P(\|(\g+\kappa\1)_+\|_2-\|\h_i\|_2-\epsilon_{5}^{(g)}\sqrt{n}\geq \xi_n^{(l)}),\label{eq:negprobanal1}
\end{equation}
where $(\g+\kappa\1)_+$ is $(\g+\kappa\1)$ vector with negative components replaced by zeros. Since $\h$ is a vector of $n$ i.i.d. standard normal variables it is rather trivial that
\begin{equation}
P(\|\h\|_2<(1+\epsilon_{1}^{(n)})\sqrt{n})\geq 1-e^{-\epsilon_{2}^{(n)} n},\label{eq:devh}
\end{equation}
where $\epsilon_{1}^{(n)}>0$ is an arbitrarily small constant and $\epsilon_{2}^{(n)}$ is a constant dependent on $\epsilon_{1}^{(n)}$ but independent of $n$. Along the same lines, since $\g$ is a vector of $m$ i.i.d. standard normal variables it easily follows that
\begin{equation}
E\sum_{i=1}^{n}(\max\{\g_i+\kappa,0\})^2=mf_{gar}(\kappa),\label{eq:efgar}
\end{equation}
where
\begin{equation}
f_{gar}(\kappa)=\frac{1}{\sqrt{2\pi}}\int_{-\kappa}^{\infty}(\g_i+\kappa)^2e^{-\frac{\g_i^2}{2}}d\g_i.\label{eq:fgar}
\end{equation}
One then easily also has
\begin{equation}
P\left (\sqrt{\sum_{i=1}^{n}(\max\{\g_i+\kappa,0\})^2}>(1-\epsilon_{1}^{(m)})\sqrt{mf_{gar}(\kappa)}\right )\geq 1-e^{-\epsilon_{2}^{(m)} m},\label{eq:devg}
\end{equation}
where $\epsilon_{1}^{(m)}>0$ is an arbitrarily small constant and analogously as above $\epsilon_{2}^{(m)}$ is a constant dependent on $\epsilon_{1}^{(m)}$ and
$f_{gar}(\kappa)$ but independent of $n$. Then a combination of (\ref{eq:negprobanal1}), (\ref{eq:devh}), and (\ref{eq:devg}) gives
\begin{multline}
P(\min_{\|\x\|_2=1}\max_{\|\lambda\|_2=1,\lambda_i\geq 0}(\g^T\lambda+\h^T\x+\kappa\lambda^T\1-\epsilon_{5}^{(g)}\sqrt{n})\geq \xi_n^{(l)})\\\geq
(1-e^{-\epsilon_{2}^{(m)} m})(1-e^{-\epsilon_{2}^{(n)} n})
P((1-\epsilon_{1}^{(m)})\sqrt{mf_{gar}(\kappa)}-(1+\epsilon_{1}^{(n)})\sqrt{n}-\epsilon_{5}^{(g)}\sqrt{n}\geq \xi_n^{(l)}).
\label{eq:negprobanal2}
\end{multline}
If
\begin{eqnarray}
& & (1-\epsilon_{1}^{(m)})\sqrt{mf_{gar}(\kappa)}-(1+\epsilon_{1}^{(n)})\sqrt{n}-\epsilon_{5}^{(g)}\sqrt{n}>\xi_n^{(l)}\nonumber \\
& \Leftrightarrow & (1-\epsilon_{1}^{(m)})\sqrt{\alpha f_{gar}(\kappa)}-(1+\epsilon_{1}^{(n)})-\epsilon_{5}^{(g)}>\frac{\xi_n^{(l)}}{\sqrt{n}},\label{eq:negcondxipu}
\end{eqnarray}
one then has from (\ref{eq:negprobanal2})
\begin{equation}
\lim_{n\rightarrow\infty}P(\min_{\|\x\|_2=1}\max_{\|\lambda\|_2=1,\lambda_i\geq 0}(\g^T\lambda+\h^T\x+\kappa\lambda^T\1-\epsilon_{5}^{(g)}\sqrt{n})\geq \xi_n^{(l)})\geq 1.\label{eq:negprobanal3}
\end{equation}

We will now look at the left-hand side of the inequality in (\ref{eq:negproblemma}). The following is then the probability of interest
\begin{equation}
P(\min_{\|\x\|_2=1}\max_{\|\lambda\|_2=1,\lambda_i\geq 0}(\kappa\lambda^T\1-\lambda^TH\x+g-\epsilon_{5}^{(g)}\sqrt{n}-\xi_n^{(l)})\geq 0).\label{eq:leftnegprobanal0}
\end{equation}
Since $P(g\geq\epsilon_{5}^{(g)}\sqrt{n})<e^{-\epsilon_{6}^{(g)} n}$ (where $\epsilon_{6}^{(g)}$ is, as all other $\epsilon$'s in this paper are, independent of $n$) from (\ref{eq:leftnegprobanal0}) we have
\begin{multline}
P(\min_{\|\x\|_2=1}\max_{\|\lambda\|_2=1,\lambda_i\geq 0}(\kappa\lambda^T\1-\lambda^TH\x+g-\epsilon_{5}^{(g)}\sqrt{n}-\xi_n^{(l)})\geq 0)
\\\leq P(\min_{\|\x\|_2=1}\max_{\|\lambda\|_2=1,\lambda_i\geq 0}(\kappa\lambda^T\1-\lambda^TH\x-\xi_n^{(l)})\geq 0)+e^{-\epsilon_{6}^{(g)} n}.\label{eq:leftnegprobanal1}
\end{multline}
When $n$ is large from (\ref{eq:leftnegprobanal1}) we then have
\begin{multline}
\hspace{-.7in}\lim_{n\rightarrow \infty}P(\min_{\|\x\|_2=1}\max_{\|\lambda\|_2=1,\lambda_i\geq 0}(\kappa\sqrt{n}\lambda^T\1-\lambda^TH\x+g-\epsilon_{5}^{(g)}\sqrt{n}-\xi_n^{(l)})\geq 0)
\leq  \lim_{n\rightarrow\infty}P(\min_{\|\x\|_2=1}\max_{\|\lambda\|_2=1,\lambda_i\geq 0}(\kappa\lambda^T\1-\lambda^TH\x-\xi_n^{(l)})\geq 0)\\
 =  \lim_{n\rightarrow\infty}P(\min_{\|\x\|_2=1}\max_{\|\lambda\|_2=1,\lambda_i\geq 0}(\kappa\lambda^T\1-\lambda^TH\x)\geq \xi_n^{(l)}).\label{eq:leftnegprobanal2}
\end{multline}
Assuming that (\ref{eq:negcondxipu}) holds, then a combination of (\ref{eq:negproblemma}), (\ref{eq:negprobanal3}), and (\ref{eq:leftnegprobanal2}) gives
\begin{equation}
\hspace{-.5in}\lim_{n\rightarrow\infty}P(\min_{\|\x\|_2=1}\max_{\|\lambda\|_2=1,\lambda_i\geq 0}(\kappa\lambda^T\1-\lambda^TH\x)\geq \xi_n^{(l)})\geq \lim_{n\rightarrow\infty}P(\min_{\|\x\|_2=1}\max_{\|\lambda\|_2=1,\lambda_i\geq 0}(\g^T\y+\h^T\x+\kappa\lambda^T\1-\epsilon_{5}^{(g)}\sqrt{n})\geq \xi_n^{(l)})\geq 1.\label{eq:leftnegprobanal3}
\end{equation}

We summarize our results from this subsection in the following lemma.

\begin{lemma}
Let $H$ be an $m\times n$ matrix with i.i.d. standard normal components. Let $n$ be large and let $m=\alpha n$, where $\alpha>0$ is a constant independent of $n$. Let $\xi_n$ be as in (\ref{eq:uncorminmax}) and let $\kappa\geq 0$ be a scalar constant independent of $n$. Let all $\epsilon$'s be arbitrarily small constants independent of $n$. Further, let $\g_i$ be a standard normal random variable and set
\begin{equation}
f_{gar}(\kappa)=\frac{1}{\sqrt{2\pi}}\int_{-\kappa}^{\infty}(\g_i+\kappa)^2e^{-\frac{\g_i^2}{2}}d\g_i.\label{eq:fgarlemmaunncorlb}
\end{equation}
Let $\xi_n^{(l)}$ be a scalar such that
\begin{equation}
(1-\epsilon_{1}^{(m)})\sqrt{\alpha f_{gar}(\kappa)}-(1+\epsilon_{1}^{(n)})-\epsilon_{5}^{(g)}>\frac{\xi_n^{(l)}}{\sqrt{n}}.\label{eq:negcondxipuneggenlemma}
\end{equation}
Then
\begin{equation}
 \lim_{n\rightarrow\infty}P(\xi_n\geq \xi_n^{(l)})=\lim_{n\rightarrow\infty}P(\min_{\|\x\|_2=1}\max_{\|\lambda\|_2=1,\lambda_i\geq 0}(\kappa\lambda^T\1-\lambda^TH\x)\geq \xi_n^{(l)})\geq 1. \label{eq:neggenproblemma}
\end{equation}
\label{lemma:neggenlemma}
\end{lemma}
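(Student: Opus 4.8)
The plan is to assemble the pieces developed through (\ref{eq:negproblemma})--(\ref{eq:leftnegprobanal3}); indeed this lemma is essentially the summary of that chain. First I would recall from (\ref{eq:uncorminmax}) that
\[
\xi_n=\min_{\|\x\|_2=1}\max_{\|\lambda\|_2=1,\lambda_i\geq 0}(\kappa\lambda^T\1-\lambda^TH\x),
\]
so the two events whose probabilities appear in (\ref{eq:neggenproblemma}) coincide verbatim; this gives the asserted equality, and it remains only to show that the common probability tends to $1$.

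For that lower bound I would invoke Lemma \ref{lemma:negproblemma} (itself a direct consequence of Gordon's comparison inequality, Theorem \ref{thm:Gordonmesh1}) with the choice $\zeta_{\lambda}=-\kappa\lambda^T\1+\epsilon_{5}^{(g)}\sqrt{n}+\xi_n^{(l)}$. This bounds the probability of interest from below by the probability of the decoupled event on the right-hand side of (\ref{eq:negproblemma}), in which both inner optimizations are explicit: the minimization over the unit sphere in $\x$ contributes $-\|\h\|_2$ by Cauchy--Schwarz, while the maximization of $(\g+\kappa\1)^T\lambda$ over $\{\|\lambda\|_2=1,\ \lambda_i\geq 0\}$ contributes $\|(\g+\kappa\1)_+\|_2$, which is exactly the identity (\ref{eq:negprobanal1}). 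Two concentration facts then close this side: $\|\h\|_2\leq(1+\epsilon_{1}^{(n)})\sqrt{n}$ with overwhelming probability by standard $\chi$-concentration, as in (\ref{eq:devh}), and $\|(\g+\kappa\1)_+\|_2\geq(1-\epsilon_{1}^{(m)})\sqrt{m f_{gar}(\kappa)}$ with overwhelming probability since $\sum_i(\max\{\g_i+\kappa,0\})^2$ is a sum of i.i.d.\ terms with finite exponential moments and mean $mf_{gar}(\kappa)$, cf.\ (\ref{eq:efgar})--(\ref{eq:devg}). Under the hypothesis (\ref{eq:negcondxipuneggenlemma}) these two estimates force the right-hand probability in (\ref{eq:negproblemma}) to tend to $1$, which is (\ref{eq:negprobanal3}).

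It then remains to transfer this to the left-hand side of (\ref{eq:negproblemma}), which I would do by discarding the nuisance variable $g$: since $P(g\geq\epsilon_{5}^{(g)}\sqrt{n})<e^{-\epsilon_{6}^{(g)}n}$, the event appearing on the left of (\ref{eq:negproblemma}) and the target event $\min_{\|\x\|_2=1}\max_{\|\lambda\|_2=1,\lambda_i\geq 0}(\kappa\lambda^T\1-\lambda^TH\x)\geq\xi_n^{(l)}$ differ in probability by at most $e^{-\epsilon_{6}^{(g)}n}$, exactly as in (\ref{eq:leftnegprobanal1})--(\ref{eq:leftnegprobanal2}); chaining this with Lemma \ref{lemma:negproblemma} and (\ref{eq:negprobanal3}) yields (\ref{eq:leftnegprobanal3}) and hence the lemma. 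The one genuinely delicate point — which I regard as the crux — is the first application of Gordon's inequality: one must verify that the min--max in (\ref{eq:uncorminmax}), with $\lambda$ ranging over the sphere intersected with the nonnegative orthant rather than the whole sphere, still fits the hypotheses of Theorem \ref{thm:Gordonmesh1} (with the ``union/max'' index being $\lambda$ and the ``intersection/min'' index being $\x$), and that the variance and covariance comparisons hold so that the inequality runs in the stated direction. Once Lemma \ref{lemma:negproblemma} is in place, everything downstream is concentration of measure together with the deterministic evaluation of the two inner optimizations.
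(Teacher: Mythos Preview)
Your proposal is correct and follows essentially the same route as the paper: the paper's proof is literally ``follows from the above discussion, (\ref{eq:negproblemma}), and (\ref{eq:leftnegprobanal3}),'' and the steps you outline (apply Lemma~\ref{lemma:negproblemma} with $\zeta_{\lambda}=-\kappa\lambda^T\1+\epsilon_{5}^{(g)}\sqrt{n}+\xi_n^{(l)}$, evaluate the decoupled inner optimizations to reach (\ref{eq:negprobanal1}), invoke the concentration estimates (\ref{eq:devh}) and (\ref{eq:devg}), and then strip off $g$ via (\ref{eq:leftnegprobanal1})--(\ref{eq:leftnegprobanal2})) are precisely that discussion. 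Your identification of the Gordon comparison as the only nontrivial ingredient is also in line with the paper, which defers that point to Lemma~\ref{lemma:negproblemma} and its references.
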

\begin{proof}
The proof follows from the above discussion, (\ref{eq:negproblemma}), and (\ref{eq:leftnegprobanal3}).
\end{proof}
In a more informal language (essentially ignoring all technicalities and $\epsilon$'s) one has that as long as
\begin{equation}
\alpha>\frac{1}{f_{gar}(\kappa)},\label{eq:condalphauncorlb}
\end{equation}
the problem in (\ref{eq:defprobucor1}) will be infeasible with overwhelming probability.

\subsection{Upper-bounding (the sign of) $\xi_n$}
\label{sec:uncorgardub}

In the previous subsection we designed a lower bound on $\xi_n$ which then helped us determine an upper bound on the critical storage capacity $\alpha_c$ (essentially the one given in (\ref{eq:condalphauncorlb})). In this subsection we will provide a mechanism that can be used to upper bound a quantity similar to $\xi_n$ (which will maintain the sign of $\xi_n$). Such an upper bound then can be used to obtain a lower bound on the critical storage capacity $\alpha_c$. As mentioned above, we start by looking at a quantity very similar to $\xi_n$. First, we recognize that when $\kappa>0$ one can alternatively rewrite the feasibility problem from (\ref{eq:defprobucor1}) in the following way
\begin{eqnarray}
& & H\x\geq \kappa\nonumber \\
& & \|\x\|_2\leq 1.\label{eq:defprobucor1ub}
\end{eqnarray}
For our needs in this subsection, the feasibility problem in (\ref{eq:defprobucor1ub}) can be formulated as the following optimization problem
\begin{eqnarray}
\xi_{nr}=\min_{\x} \max_{\lambda\geq 0} & &  \kappa\lambda^T\1- \lambda^T H\x \nonumber \\
\mbox{subject to} 
& & \|\lambda\|_2\leq 1\nonumber \\
& & \|\x\|_2\leq 1.\label{eq:uncorminmax}
\end{eqnarray}
For (\ref{eq:defprobucor1ub}) to be infeasible one has to have $\xi_{nr}>0$. Using duality one has
\begin{eqnarray}
\xi_{nr}= \max_{\lambda\geq 0} \min_{\x} & &  \kappa\lambda^T\1- \lambda^T H\x \nonumber \\
\mbox{subject to} 
& & \|\lambda\|_2\leq 1\nonumber \\
& & \|\x\|_2\leq 1,\label{eq:uncormaxmin}
\end{eqnarray}
and alternatively
\begin{eqnarray}
-\xi_{nr}= \min_{\lambda\geq 0} \max_{\x} & &  -\kappa\lambda^T\1+\lambda^T H\x \nonumber \\
\mbox{subject to} 
& & \|\lambda\|_2\leq 1\nonumber \\
& & \|\x\|_2\leq 1.\label{eq:uncormaxmin}
\end{eqnarray}
We will now proceed in a fashion similar to the on presented in the previous subsection. We will make use of the following lemma (the lemma is fairly similar to Lemma \ref{lemma:negproblemma} and of course fairly similar to Lemma 3.1 in \cite{Gordon88}; see also \cite{StojnicHopBnds10} for similar considerations).
\begin{lemma}
Let $H$ be an $m\times n$ matrix with i.i.d. standard normal components. Let $\g$ and $\h$ be $m\times 1$ and $n\times 1$ vectors, respectively, with i.i.d. standard normal components. Also, let $g$ be a standard normal random variable and let $\zeta_{\lambda}$ be a function of $\x$. Then
\begin{equation}
P(\min_{\|\lambda\|_2\leq 1,\lambda_i\geq 0}\max_{\|\x\|_2\leq 1}(\lambda^T H\x+g\|\lambda\|_2\|\x\|_2-\zeta_{\lambda})\geq 0)\geq
P(\min_{\|\lambda\|_2\leq 1,\lambda_i\geq 0}\max_{\|\x\|_2\leq 1}(\|\x\|_2\g^T\lambda+\|\lambda\|_2\h^T\x-\zeta_{\lambda})\geq 0).\label{eq:negproblemmaub}
\end{equation}\label{lemma:negproblemmaub}
\end{lemma}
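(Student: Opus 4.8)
The plan is to apply Theorem~\ref{thm:Gordonmesh1} directly, after exhibiting two Gaussian processes whose second-order structure satisfies the three comparison hypotheses. For a pair $(\lambda,\x)$ with $\|\lambda\|_2\le 1$, $\lambda_i\ge 0$, and $\|\x\|_2\le 1$, define
\begin{equation*}
X_{\lambda,\x}=\lambda^T H\x+g\|\lambda\|_2\|\x\|_2,\qquad Y_{\lambda,\x}=\|\x\|_2\g^T\lambda+\|\lambda\|_2\h^T\x,
\end{equation*}
where $H$, $g$, $\g$, $\h$ are the independent standard Gaussian objects in the statement. Both are centered Gaussian processes indexed by $(\lambda,\x)$. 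A direct computation gives $E(X_{\lambda,\x}^2)=\|\lambda\|_2^2\|\x\|_2^2+\|\lambda\|_2^2\|\x\|_2^2=2\|\lambda\|_2^2\|\x\|_2^2$ — wait, more carefully: $E((\lambda^T H\x)^2)=\|\lambda\|_2^2\|\x\|_2^2$ and $E((g\|\lambda\|_2\|\x\|_2)^2)=\|\lambda\|_2^2\|\x\|_2^2$, and the two summands are independent, so $E(X_{\lambda,\x}^2)=2\|\lambda\|_2^2\|\x\|_2^2$; similarly $E(Y_{\lambda,\x}^2)=\|\x\|_2^2\|\lambda\|_2^2+\|\lambda\|_2^2\|\x\|_2^2=2\|\lambda\|_2^2\|\x\|_2^2$, so hypothesis~(1) holds. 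For the cross terms one computes, for indices $(\lambda,\x)$ and $(\lambda,\x')$ sharing the same $\lambda$ (the ``$i=l$'' case, where $\lambda$ plays the role of the outer index $i$ and $\x$ the inner index $j$),
\begin{equation*}
E(X_{\lambda,\x}X_{\lambda,\x'})=\|\lambda\|_2^2\,\x^T\x'+\|\lambda\|_2^2\|\x\|_2\|\x'\|_2,
\end{equation*}
\begin{equation*}
E(Y_{\lambda,\x}Y_{\lambda,\x'})=\|\lambda\|_2^2\,\x^T\x'+\|\lambda\|_2^2\|\x\|_2\|\x'\|_2,
\end{equation*}
so that hypothesis~(2) is met with equality. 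For indices $(\lambda,\x)$ and $(\lambda',\x')$ with $\lambda\neq\lambda'$ (the ``$i\neq l$'' case) one gets
\begin{equation*}
E(X_{\lambda,\x}X_{\lambda',\x'})=(\lambda^T\lambda')(\x^T\x')+\|\lambda\|_2\|\lambda'\|_2\|\x\|_2\|\x'\|_2,
\end{equation*}
\begin{equation*}
E(Y_{\lambda,\x}Y_{\lambda',\x'})=(\x^T\x')(\lambda^T\lambda')+(\lambda^T\lambda')(\x^T\x'),
\end{equation*}
and the required inequality $E(X_{\lambda,\x}X_{\lambda',\x'})\le E(Y_{\lambda,\x}Y_{\lambda',\x'})$ reduces after cancelling the common term $(\lambda^T\lambda')(\x^T\x')$ to $\|\lambda\|_2\|\lambda'\|_2\|\x\|_2\|\x'\|_2\le (\lambda^T\lambda')(\x^T\x')$, which is \emph{not} true in general. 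This sign mismatch is the crux, and the way around it is that the index set is restricted to $\lambda_i\ge 0$: on the positive orthant one does not have $\lambda^T\lambda'\ge\|\lambda\|_2\|\lambda'\|_2$ either, so the naive pairing fails, and the correct device (exactly as in Lemma~3.1 of \cite{Gordon88} and in the proof of Lemma~\ref{lemma:negproblemma}) is to first reduce to the case where the outer maximum/minimum is taken over \emph{discretized} or \emph{normalized} index sets and then invoke the minimax/escape structure; I will follow that route.

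Concretely, the plan is: (i) reduce the claim to the case $\|\lambda\|_2=1$, $\|\x\|_2=1$ — note that both $X_{\lambda,\x}$ and $Y_{\lambda,\x}$ are positively homogeneous of degree $1$ in $\lambda$ and in $\x$ separately, and the events $\{\max_{\|\x\|_2\le 1}(\cdot)\ge 0\}$ are unaffected by also allowing $\x=\0$ or rescaling, so after a standard truncation/continuity argument the relevant probability equals the corresponding probability with the sphere constraints $\|\lambda\|_2=1$, $\|\x\|_2=1$ and the outer $\min$ over $\lambda$ on the sphere intersected with the positive orthant; (ii) on these normalized sets the processes simplify to $X_{\lambda,\x}=\lambda^T H\x+g$ and $Y_{\lambda,\x}=\g^T\lambda+\h^T\x$, and the three Gordon hypotheses now read $E(X^2)=E(Y^2)=2$, equality in the same-$\lambda$ cross term ($\x^T\x'+1=\x^T\x'+1$), and in the different-$\lambda$ case $(\lambda^T\lambda')(\x^T\x')+1\le (\lambda^T\lambda')+(\x^T\x')$, i.e.\ $1+ab\le a+b$ with $a=\lambda^T\lambda'\le 1$, $b=\x^T\x'\le 1$, which holds since $(1-a)(1-b)\ge 0$; (iii) apply Theorem~\ref{thm:Gordonmesh1} with $X,Y$ indexed by $(i,j)=(\lambda,\x)$ ($\lambda$ outer, $\x$ inner), threshold $\lambda_{ij}=\zeta_\lambda$ (which depends only on the outer index), and the events $\bigcap_\lambda\bigcup_\x\{X_{\lambda,\x}\ge\zeta_\lambda\}$; since for continuous processes $\bigcup_\x\{X_{\lambda,\x}\ge\zeta_\lambda\}=\{\max_\x X_{\lambda,\x}\ge\zeta_\lambda\}$ and $\bigcap_\lambda=\{\min_\lambda(\cdots)\}$, Theorem~\ref{thm:Gordonmesh1} gives exactly $P(\min_\lambda\max_\x(X_{\lambda,\x}-\zeta_\lambda)\ge 0)\le P(\min_\lambda\max_\x(Y_{\lambda,\x}-\zeta_\lambda)\ge 0)$ — and here the inequality points the right way because $X$ is the process with the \emph{larger} same-index correlations (hypothesis~(2) holds for the pair $(X,Y)$, not $(Y,X)$), so $X$ is the one appearing on the left of Theorem~\ref{thm:Gordonmesh1}'s conclusion, which matches (\ref{eq:negproblemmaub}).

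The measure-theoretic passage from the countable index sets of Theorem~\ref{thm:Gordonmesh1} to the compact index sets $\{\|\lambda\|_2=1,\lambda_i\ge 0\}\times\{\|\x\|_2=1\}$ is handled in the standard way (restrict to a dense countable net, use continuity of the Gaussian sample paths and of the thresholds, pass to the limit via monotone/dominated convergence of the probabilities of the finite-net events), exactly as in \cite{Gordon88} and \cite{StojnicHopBnds10}; this is entirely analogous to the corresponding step in Lemma~\ref{lemma:negproblemma} and introduces nothing new. I therefore expect the only genuinely delicate point to be step~(i)–(ii): verifying that the homogeneity reduction to the spheres is legitimate for the min-over-$\lambda$-over-positive-orthant structure (so that, e.g., nothing is lost by the constraint $\lambda_i\ge 0$ interacting with the normalization), and checking the sign of the different-index comparison $(\lambda^T\lambda')(\x^T\x')+1\le\lambda^T\lambda'+\x^T\x'$. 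Once these are in place the result follows immediately from Theorem~\ref{thm:Gordonmesh1}, so the proof can legitimately be stated as ``the proof is similar to that of Lemma~\ref{lemma:negproblemma}, the only difference being the extra $\|\lambda\|_2,\|\x\|_2$ factors, which change nothing structurally,'' as the paper does for its companion lemmas.
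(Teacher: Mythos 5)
Your instinct to prove the lemma by a direct application of Theorem~\ref{thm:Gordonmesh1} is the right one (and is essentially what the paper does, by deferring to Gordon's Lemma~3.1), but there is a genuine sign error in the setup that makes the argument, as written, prove the reverse inequality. You have assigned the $H$-process to $X$ and the $(\g,\h)$-process to $Y$; with that assignment Theorem~\ref{thm:Gordonmesh1} would yield $P(\text{$H$-process})\le P(\text{$(\g,\h)$-process})$, the opposite of~(\ref{eq:negproblemmaub}). Concretely, on the spheres the different-$\lambda$ correlation check you wrote down, $1+ab\le a+b$ with $a=\lambda^T\lambda'$, $b=\x^T\x'$, is equivalent to $(1-a)(1-b)\le 0$, whereas the factorization $(1-a)(1-b)\ge 0$ (which is what Cauchy--Schwarz gives, since $a,b\le 1$) shows $1+ab\ge a+b$. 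So hypothesis~(3) of Theorem~\ref{thm:Gordonmesh1} \emph{fails} for your $(X,Y)$; it holds precisely for the reversed pair. The fix is to let Gordon's $X$ be the $(\g,\h)$-process and Gordon's $Y$ be the $H$-process: then the same-outer-index covariances agree (hypothesis~(2) holds with equality), and for $\lambda\neq\lambda'$ one needs $\g,\h\le H$ in covariance, which is $a+b\le ab+1$, i.e.\ $(1-a)(1-b)\ge 0$, true. Theorem~\ref{thm:Gordonmesh1} then yields exactly $P(\text{$(\g,\h)$-process})\le P(\text{$H$-process})$, which is~(\ref{eq:negproblemmaub}).

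Two further observations that would tidy the argument. First, the normalization to spheres is not actually needed: for general $\|\lambda\|,\|\lambda'\|,\|\x\|,\|\x'\|\le 1$ one has
\begin{equation*}
\bigl(\lambda^T\lambda'\bigr)\bigl(\x^T\x'\bigr)+\|\lambda\|\|\lambda'\|\,\|\x\|\|\x'\|-\|\x\|\|\x'\|\bigl(\lambda^T\lambda'\bigr)-\|\lambda\|\|\lambda'\|\bigl(\x^T\x'\bigr)
=\bigl(\lambda^T\lambda'-\|\lambda\|\|\lambda'\|\bigr)\bigl(\x^T\x'-\|\x\|\|\x'\|\bigr)\ge 0,
\end{equation*}
since each factor is $\le 0$ by Cauchy--Schwarz; this is precisely why the extra $\|\lambda\|_2\|\x\|_2$ factors in the lemma ``change nothing structurally'' (the paper's phrasing). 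So one can skip step~(i) entirely and avoid the question of whether the homogeneity reduction interacts badly with the $\x$-dependence of $\zeta_\lambda$. Second, and relatedly, your initial ``naive'' cross-correlation computation contains an algebra slip: you wrote $E(Y_{\lambda,\x}Y_{\lambda',\x'})=(\x^T\x')(\lambda^T\lambda')+(\lambda^T\lambda')(\x^T\x')$, whereas the correct value is $\|\x\|\|\x'\|(\lambda^T\lambda')+\|\lambda\|\|\lambda'\|(\x^T\x')$; with that corrected, the ``sign mismatch'' you flagged disappears after the Cauchy--Schwarz factorization above, and there is no need to invoke the restricted index set $\lambda_i\ge 0$ (which indeed plays no role in the covariance comparison and is merely inherited as part of the index set). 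The constraint $\lambda_i\ge 0$ only matters downstream, when the right-hand side of~(\ref{eq:negproblemmaub}) is actually evaluated.

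So: swap the roles of $X$ and $Y$, drop the sphere reduction and the incorrect sign check, and the remaining skeleton — identify the two Gaussian processes, verify the three Slepian--Gordon hypotheses via the Cauchy--Schwarz factorization, then pass $\bigcap_\lambda\bigcup_\x$ to $\min_\lambda\max_\x$ — is exactly the paper's intended (but unwritten) argument.
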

\begin{proof}
The discussion related to the proof of Lemma \ref{lemma:negproblemma} applies here as well.
\end{proof}

Let $\zeta_{\lambda}=\kappa\lambda^T\1+\epsilon_{5}^{(g)}\sqrt{n}\|\lambda\|_2\|\x\|_2$ with $\epsilon_{5}^{(g)}>0$ being an arbitrarily small constant independent of $n$. We will follow the strategy of the previous subsection and start by first looking at the right-hand side of the inequality in (\ref{eq:negproblemmaub}). The following is then the probability of interest
\begin{equation}
P(\min_{\|\lambda\|_2\leq 1,\lambda_i\geq 0,\lambda\neq 0}\max_{\|\x\|_2\leq 1}(\|\x\|_2\g^T\lambda+\|\lambda\|_2\h^T\x-\kappa\lambda^T\1-\epsilon_{5}^{(g)}\sqrt{n}\|\lambda\|_2\|\x\|_2)>0),\label{eq:negprobanal0ub}
\end{equation}
where for the easiness of writing we removed possibility $\lambda=0$ (also, such a case contributes in no way to the possibility that $-\xi_{nr}<0)$.
After solving the minimization over $\x$ and the maximization over $\lambda$ one obtains
\begin{multline}
P(\min_{\|\lambda\|_2\leq 1,\lambda_i\geq 0,\lambda\neq 0}\max_{\|\x\|_2\leq 1}(\|\x\|_2\g^T\lambda+\|\lambda\|_2\h^T\x-\kappa\lambda^T\1-\epsilon_{5}^{(g)}\sqrt{n}\|\lambda\|_2\|\x\|_2)> 0)\\=P(\min_{\|\lambda\|_2\leq 1,\lambda_i\geq 0,\lambda\neq 0}(\max(0,\|\h\|_2\|\lambda\|_2+\g\lambda-\epsilon_{5}^{(g)}\sqrt{n}\|\lambda\|_2)-\kappa\lambda^T\1)>0).\label{eq:negprobanal1ub}
\end{multline}
Now, we will for a moment assume that $m$ and $n$ are such that
\begin{equation}
\lim_{n\rightarrow\infty}P(\min_{\|\lambda\|_2\leq 1,\lambda_i\geq 0,\lambda\neq 0}(\|\h\|_2\|\lambda\|_2+\g\lambda-\epsilon_{5}^{(g)}\sqrt{n}\|\lambda\|_2-\kappa\lambda^T\1)> 0)=1.\label{eq:negprobanal2ub}
\end{equation}
That would also imply that
\begin{equation}
\lim_{n\rightarrow\infty}P(\min_{\|\lambda\|_2\leq 1,\lambda_i\geq 0,\lambda\neq 0}(\max(0,\|\h\|_2\|\lambda\|_2+\g\lambda-\epsilon_{5}^{(g)}\sqrt{n}\|\lambda\|_2)-\kappa\lambda^T\1)>0)=1.\label{eq:negprobanal3ub}
\end{equation}
What is then left to be done is to determine an $\alpha=\frac{m}{n}$ such that (\ref{eq:negprobanal2ub}) holds. One then easily has
\begin{multline}
P(\min_{\|\lambda\|_2\leq 1,\lambda_i\geq 0,\lambda\neq 0}(\|\h\|_2\|\lambda\|_2+\g\lambda-\epsilon_{5}^{(g)}\sqrt{n}\|\lambda\|_2-\kappa\lambda^T\1)> 0)\\=
P(\min_{\|\lambda\|_2\leq 1,\lambda_i\geq 0,\lambda\neq 0}\|\lambda\|_2(\|\h\|_2-\|(\g-\kappa\1)_+\|_2-\epsilon_{5}^{(g)}\sqrt{n})> 0),\label{eq:negprobanal4ub}
\end{multline}
where similarly to what we had in the previous subsection $(\g-\kappa\1)_-$ is $(\g-\kappa\1)$ vector with positive components replaced by zeros. Since $\h$ is a vector of $n$ i.i.d. standard normal variables it is rather trivial that
\begin{equation}
P(\|\h\|_2>(1-\epsilon_{1}^{(n)})\sqrt{n})\geq 1-e^{-\epsilon_{2}^{(n)} n},\label{eq:devhub}
\end{equation}
where $\epsilon_{1}^{(n)}>0$ is an arbitrarily small constant and $\epsilon_{2}^{(n)}$ is a constant dependent on $\epsilon_{1}^{(n)}$ but independent of $n$. Along the same lines, since $\g$ is a vector of $m$ i.i.d. standard normal variables it easily follows that
\begin{equation}
E\sum_{i=1}^{n}(\min\{\g_i-\kappa,0\})^2=mf_{gar}(\kappa),\label{eq:efgarub}
\end{equation}
where we recall
\begin{equation}
f_{gar}(\kappa)=\frac{1}{\sqrt{2\pi}}\int_{-\kappa}^{\infty}(\g_i+\kappa)^2e^{-\frac{\g_i^2}{2}}d\g_i
=\frac{1}{\sqrt{2\pi}}\int_{-\infty}^{\kappa}(\g_i-\kappa)^2e^{-\frac{\g_i^2}{2}}d\g_i.\label{eq:fgar}
\end{equation}
One then easily also has
\begin{equation}
P\left (\sqrt{\sum_{i=1}^{n}(\min\{\g_i-\kappa,0\})^2}<(1+\epsilon_{1}^{(m)})\sqrt{mf_{gar}(\kappa)}\right )\geq 1-e^{-\epsilon_{2}^{(m)} m},\label{eq:devgub}
\end{equation}
where we recall that $\epsilon_{1}^{(m)}>0$ is an arbitrarily small constant and $\epsilon_{2}^{(m)}$ is a constant dependent on $\epsilon_{1}^{(m)}$ and
$f_{gar}(\kappa)$ but independent of $n$. Then a combination of (\ref{eq:negprobanal4ub}), (\ref{eq:devhub}), and (\ref{eq:devgub}) gives
\begin{multline}
P(\min_{\|\lambda\|_2\leq 1,\lambda_i\geq 0,\lambda\neq 0}(\|\h\|_2\|\lambda\|_2+\g\lambda-\epsilon_{5}^{(g)}\sqrt{n}\|\lambda\|_2-\kappa\lambda^T\1)> 0)\\\geq
(1-e^{-\epsilon_{2}^{(m)} m})(1-e^{-\epsilon_{2}^{(n)} n})
P((1-\epsilon_{1}^{(n)})\sqrt{n}-(1+\epsilon_{1}^{(m)})\sqrt{mf_{gar}(\kappa)}-\epsilon_{5}^{(g)}\sqrt{n}> 0).
\label{eq:negprobanal22ub}
\end{multline}
If
\begin{eqnarray}
& & (1-\epsilon_{1}^{(n)})\sqrt{n}-(1+\epsilon_{1}^{(m)})\sqrt{mf_{gar}(\kappa)}-\epsilon_{5}^{(g)}\sqrt{n}>0\nonumber \\
& \Leftrightarrow & (1-\epsilon_{1}^{(n)})-(1+\epsilon_{1}^{(m)})\sqrt{\alpha f_{gar}(\kappa)}-\epsilon_{5}^{(g)}>0,\label{eq:negcondxipuub}
\end{eqnarray}
one then has from (\ref{eq:negprobanal22ub})
\begin{equation}
\lim_{n\rightarrow\infty}P(\min_{\|\lambda\|_2\leq 1,\lambda_i\geq 0,\lambda\neq 0}(\|\h\|_2\|\lambda\|_2+\g\lambda-\epsilon_{5}^{(g)}\sqrt{n}\|\lambda\|_2-\kappa\lambda^T\1)> 0)\geq 1.\label{eq:negprobanal33ub}
\end{equation}
A combination of  (\ref{eq:negprobanal1ub}),  (\ref{eq:negprobanal2ub}),  (\ref{eq:negprobanal3ub}), and  (\ref{eq:negprobanal33ub}) gives that if (\ref{eq:negcondxipuub}) holds then
\begin{equation}
\lim_{n\rightarrow\infty}P(\min_{\|\lambda\|_2\leq 1,\lambda_i\geq 0,\lambda\neq 0}\max_{\|\x\|_2\leq 1}(\|\x\|_2\g^T\lambda+\|\lambda\|_2\h^T\x-\kappa\lambda^T\1-\epsilon_{5}^{(g)}\sqrt{n}\|\lambda\|_2\|\x\|_2)> 0)\geq 1.\label{eq:negprobanal44ub}
\end{equation}

We will now look at the left-hand side of the inequality in (\ref{eq:negproblemmaub}). The following is then the probability of interest
\begin{equation}
P(\min_{\|\lambda\|_2\leq 1,\lambda_i\geq 0}\max_{\|\x\|_2\leq 1}(\lambda^TH\x-\kappa\lambda^T\1+(g-\epsilon_{5}^{(g)}\sqrt{n})\|\lambda\|_2\|\x\|_2)\geq 0).\label{eq:leftnegprobanal0ub}
\end{equation}
Since $P(g\geq\epsilon_{5}^{(g)}\sqrt{n})<e^{-\epsilon_{6}^{(g)} n}$ (where $\epsilon_{6}^{(g)}$ is, as all other $\epsilon$'s in this paper are, independent of $n$) from (\ref{eq:leftnegprobanal0ub}) we have
\begin{multline}
P(\min_{\|\lambda\|_2\leq 1,\lambda_i\geq 0}\max_{\|\x\|_2\leq 1}(\lambda^TH\x-\kappa\lambda^T\1+(g-\epsilon_{5}^{(g)}\sqrt{n})\|\lambda\|_2\|\x\|_2)\geq 0)\\\leq P(\min_{\|\lambda\|_2\leq 1,\lambda_i\geq 0}\max_{\|\x\|_2\leq 1}(\lambda^TH\x-\kappa\lambda^T\1)\geq 0)+e^{-\epsilon_{6}^{(g)} n}.\label{eq:leftnegprobanal1ub}
\end{multline}
When $n$ is large from (\ref{eq:leftnegprobanal1ub}) we then have
\begin{multline}
\hspace{-.7in}\lim_{n\rightarrow \infty}P(\min_{\|\lambda\|_2\leq 1,\lambda_i\geq 0}\max_{\|\x\|_2\leq 1}(\lambda^TH\x-\kappa\lambda^T\1+(g-\epsilon_{5}^{(g)}\sqrt{n})\|\lambda\|_2\|\x\|_2)\geq 0)\\\leq \lim_{n\rightarrow \infty}P(\min_{\|\lambda\|_2\leq 1,\lambda_i\geq 0}\max_{\|\x\|_2\leq 1}(\lambda^TH\x-\kappa\lambda^T\1)\geq 0).\label{eq:leftnegprobanal2ub}
\end{multline}
Assuming that (\ref{eq:negcondxipuub}) holds, then a combination of (\ref{eq:uncormaxmin}), (\ref{eq:negproblemmaub}), (\ref{eq:negprobanal44ub}), and (\ref{eq:leftnegprobanal2ub}) gives
\begin{eqnarray}
\lim_{n\rightarrow \infty}P(\xi_{nr}\leq 0) & = & \lim_{n\rightarrow \infty}P(-\xi_{nr}\geq 0)\nonumber \\
& = & \lim_{n\rightarrow \infty}P(\min_{\|\lambda\|_2\leq 1,\lambda_i\geq 0}\max_{\|\x\|_2\leq 1}(\lambda^TH\x-\kappa\lambda^T\1)\geq 0)
\nonumber \\
& \geq &
\lim_{n\rightarrow \infty}P(\min_{\|\lambda\|_2\leq 1,\lambda_i\geq 0}\max_{\|\x\|_2\leq 1}(\lambda^TH\x-\kappa\lambda^T\1+(g-\epsilon_{5}^{(g)}\sqrt{n})\|\lambda\|_2\|\x\|_2)\geq 0)\nonumber \\
& \geq &
\lim_{n\rightarrow\infty}P(\min_{\|\lambda\|_2\leq 1,\lambda_i\geq 0,\lambda\neq 0}\max_{\|\x\|_2\leq 1}(\|\x\|_2\g^T\lambda+\|\lambda\|_2\h^T\x-\kappa\lambda^T\1-\epsilon_{5}^{(g)}\sqrt{n}\|\lambda\|_2\|\x\|_2)> 0)\nonumber \\
& \geq & 1.\label{eq:leftnegprobanal3ub}
\end{eqnarray}
From (\ref{eq:leftnegprobanal3ub}) one then has
\begin{equation}
\lim_{n\rightarrow \infty}P(\xi_{nr}> 0)=1-\lim_{n\rightarrow \infty}P(\xi_{nr}\leq 0)\leq 0,\label{eq:leftnegprobanal4ub}
\end{equation}
which implies that (\ref{eq:defprobucor1ub}) is feasible with overwhelming probability if (\ref{eq:negcondxipuub}) holds.

We summarize our results from this subsection in the following lemma.

\begin{lemma}
Let $H$ be an $m\times n$ matrix with i.i.d. standard normal components. Let $n$ be large and let $m=\alpha n$, where $\alpha>0$ is a constant independent of $n$. Let $\xi_n$ be as in (\ref{eq:uncorminmax}) and let $\kappa\geq 0$ be a scalar constant independent of $n$. Let all $\epsilon$'s be arbitrarily small constants independent of $n$. Further, let $\g_i$ be a standard normal random variable and set
\begin{equation}
f_{gar}(\kappa)=\frac{1}{\sqrt{2\pi}}\int_{-\kappa}^{\infty}(\g_i+\kappa)^2e^{-\frac{\g_i^2}{2}}d\g_i
=\frac{1}{\sqrt{2\pi}}\int_{-\infty}^{\kappa}(\g_i-\kappa)^2e^{-\frac{\g_i^2}{2}}d\g_i.\label{eq:fgarlemmaunncorub}
\end{equation}
Let $\alpha>0$ be such that
\begin{equation}
(1-\epsilon_{1}^{(n)})-(1+\epsilon_{1}^{(m)})\sqrt{\alpha f_{gar}(\kappa)}-\epsilon_{5}^{(g)}>0.\label{eq:negcondxipuneggenlemmaub}
\end{equation}
Then
\begin{equation}
\lim_{n\rightarrow \infty}P(-\xi_{nr}\geq 0)=\lim_{n\rightarrow \infty}P(-\xi_{nr}\geq 0) = \lim_{n\rightarrow\infty}P(\min_{\|\lambda\|_2\leq 1,\lambda_i\geq 0,\lambda\neq 0}\max_{\|\x\|_2\leq 1}(\kappa\lambda^T\1-\lambda^TH\x)\geq 0)\geq 1. \label{eq:neggenproblemmaub}
\end{equation}
Moreover,
\begin{equation}
\lim_{n\rightarrow \infty}P(\xi_{nr}> 0)=1-\lim_{n\rightarrow \infty}P(\xi_{nr}\leq 0)\leq 0,\label{eq:neggenproblemmaub1}
\end{equation}
\label{lemma:neggenlemmaub}
\end{lemma}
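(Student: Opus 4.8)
The plan is to follow the program of the preceding subsection essentially verbatim, combining a duality reformulation, Gordon's comparison inequality (Theorem~\ref{thm:Gordonmesh1}) in the form of Lemma~\ref{lemma:negproblemmaub}, and standard Gaussian concentration. First I would record the reformulation: for $\kappa>0$ the constraint $\|\x\|_2=1$ in (\ref{eq:defprobucor1}) may be relaxed to $\|\x\|_2\le 1$ without changing feasibility, since $H\x\ge\kappa>0$ forces $\x\ne\0$ and rescaling $\x$ to unit norm only increases the entries of $H\x$. This gives the optimization problem (\ref{eq:uncorminmax}) for $\xi_{nr}$; because the payoff $\kappa\lambda^T\1-\lambda^T H\x$ is bilinear and the sets $\{\lambda\ge 0,\|\lambda\|_2\le 1\}$ and $\{\|\x\|_2\le 1\}$ are compact and convex, Sion's minimax theorem licenses the exchange of $\min$ and $\max$, hence the two dual forms in (\ref{eq:uncormaxmin}); in particular $-\xi_{nr}=\min_{\lambda\ge 0,\|\lambda\|_2\le 1}\max_{\|\x\|_2\le 1}(\lambda^T H\x-\kappa\lambda^T\1)$. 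Since $\xi_{nr}\ge 0$ always, proving $-\xi_{nr}\ge 0$ with overwhelming probability is exactly proving that (\ref{eq:defprobucor1ub}), and hence (\ref{eq:defprobucor1}), is feasible with overwhelming probability.

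Next I would introduce the perturbed objective with $\zeta_\lambda=\kappa\lambda^T\1+\epsilon_5^{(g)}\sqrt{n}\,\|\lambda\|_2\|\x\|_2$ and invoke Lemma~\ref{lemma:negproblemmaub}, which reduces the goal to showing that the decoupled auxiliary objective $\min_{\lambda\ne\0}\max_{\x}(\|\x\|_2\g^T\lambda+\|\lambda\|_2\h^T\x-\kappa\lambda^T\1-\epsilon_5^{(g)}\sqrt{n}\|\lambda\|_2\|\x\|_2)$ is strictly positive with overwhelming probability. Solving the inner maximization radially (write $\x=r\u$ with $\|\u\|_2=1$ and $0\le r\le 1$, maximize over $\u$ to produce $\|\h\|_2$, then optimize the resulting affine function of $r$) yields the $\max(0,\cdot)$ expression of (\ref{eq:negprobanal1ub}); since $\kappa\lambda^T\1\ge 0$, it suffices to prove the stronger inequality $\|\h\|_2\|\lambda\|_2+\g^T\lambda-\epsilon_5^{(g)}\sqrt{n}\|\lambda\|_2-\kappa\lambda^T\1>0$ for every admissible $\lambda\ne\0$. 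Writing $\lambda=r\mu$ with $\|\mu\|_2=1$, $\mu\ge\0$, and minimizing $(\g-\kappa\1)^T\mu$ (optimal choice $\mu_i\propto-\min(\g_i-\kappa,0)$), this factors as $r\bigl(\|\h\|_2-\epsilon_5^{(g)}\sqrt{n}-\|(\g-\kappa\1)_-\|_2\bigr)$, so everything comes down to the single scalar inequality $\|\h\|_2-\epsilon_5^{(g)}\sqrt{n}-\|(\g-\kappa\1)_-\|_2>0$.

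Then I would close with concentration: $\|\h\|_2\ge(1-\epsilon_1^{(n)})\sqrt{n}$ with overwhelming probability ($\chi$-concentration on $n$ coordinates), and $\|(\g-\kappa\1)_-\|_2=\sqrt{\sum_{i=1}^{m}\min(\g_i-\kappa,0)^2}\le(1+\epsilon_1^{(m)})\sqrt{m f_{gar}(\kappa)}$ with overwhelming probability, using $E\sum_{i=1}^{m}\min(\g_i-\kappa,0)^2=m f_{gar}(\kappa)$ from (\ref{eq:efgarub}) together with the second expression for $f_{gar}(\kappa)$ in (\ref{eq:fgarlemmaunncorub}). Dividing by $\sqrt{n}$ and substituting $m=\alpha n$, these estimates make the scalar inequality hold precisely under hypothesis (\ref{eq:negcondxipuneggenlemmaub}). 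To transfer the conclusion back through Lemma~\ref{lemma:negproblemmaub}, I would use that a single standard normal $g$ satisfies $g\le\epsilon_5^{(g)}\sqrt{n}$ with overwhelming probability, so $(g-\epsilon_5^{(g)}\sqrt{n})\|\lambda\|_2\|\x\|_2\le 0$ and hence $\min_\lambda\max_\x(\lambda^T H\x-\kappa\lambda^T\1)$ dominates $\min_\lambda\max_\x(\lambda^T H\x-\kappa\lambda^T\1+(g-\epsilon_5^{(g)}\sqrt{n})\|\lambda\|_2\|\x\|_2)$, the latter being $\ge 0$ with overwhelming probability by the previous steps. This yields $-\xi_{nr}\ge 0$ with overwhelming probability, i.e. $P(\xi_{nr}>0)\to 0$, which is exactly (\ref{eq:neggenproblemmaub}) and (\ref{eq:neggenproblemmaub1}).

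The main obstacle here is bookkeeping rather than depth. The crucial points are: getting the direction of Gordon's inequality right — we need a lower bound on $P(\text{objective}\ge 0)$, which is exactly what Lemma~\ref{lemma:negproblemmaub} supplies — and handling the $\max(0,\cdot)$ term and the two radial reductions (over the scale of $\x$ and over the scale of $\lambda$) without incurring spurious loss, so that the final condition matches (\ref{eq:negcondxipuneggenlemmaub}) exactly. The duality swap is routine once compactness and bilinearity are noted, and the two concentration bounds are entirely standard.
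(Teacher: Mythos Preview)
Your proposal is correct and follows essentially the same route as the paper: relax to $\|\x\|_2\le 1$, swap $\min$ and $\max$ by duality, apply Lemma~\ref{lemma:negproblemmaub} with the same choice of $\zeta_\lambda$, reduce the auxiliary problem radially in $\x$ and then in $\lambda$ to the scalar inequality $\|\h\|_2-\epsilon_5^{(g)}\sqrt n-\|(\g-\kappa\1)_-\|_2>0$, close with the two concentration estimates, and remove the $g$-perturbation. You are slightly more explicit than the paper in a few places (naming Sion's theorem, justifying why the relaxation to $\|\x\|_2\le 1$ is harmless, and explaining why dropping the $\max(0,\cdot)$ is legitimate via $\kappa\lambda^T\1\ge 0$), but the substance and the order of steps are identical.
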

\begin{proof}
Follows from the above discussion.
\end{proof}
Similarly to what was done in the previous subsection, one can again be a bit more informal and ignore all technicalities and $\epsilon$'s. After doing so one has that as long as
\begin{equation}
\alpha<\frac{1}{f_{gar}(\kappa)},\label{eq:condalphauncorub}
\end{equation}
the problem in (\ref{eq:defprobucor1}) will be feasible with overwhelming probability. Moreover, combining results of Lemmas \ref{lemma:neggenlemma} and \ref{lemma:neggenlemmaub} one obtains (of course in an informal language) for the storage capacity $\alpha_c$
\begin{equation}
\alpha_c=\frac{1}{f_{gar}(\kappa)}.\label{eq:stcapposk}
\end{equation}
The value obtained for the storage capacity in (\ref{eq:stcapposk}) matches the one obtained in \cite{Gar88} while utilizing the replica approach. In \cite{SchTir02,SchTir03} as well as in \cite{TalBook} the above was then rigorously established as the storage capacity. In fact a bit more is shown in \cite{SchTir02,SchTir03,TalBook}. Namely, the authors considered a partition function type of quantity (i.e. a free energy type of quantity) and determined its behavior in the entire temperature regime. The storage capacity is essentially obtained based on the ground state (zero-temperature) behavior of such a free energy.

\subsection{Negative $\kappa$}
\label{sec:negkappa}

In \cite{TalBook}, Talagrand raised the question related to the behavior of the spherical perceptron when $\kappa<0$. Along the same lines, in \cite{TalBook}, Conjecture 8.4.4 was formulated where it was stated that if $\alpha>\frac{1}{f_{gar}(\kappa)}$ then the problem in (\ref{eq:defprobucor1}) is infeasible with overwhelming probability. The fact that $\kappa>0$ was never really used in our derivations in Subsection \ref{sec:uncorgardlb}. In other words, the entire derivation presented in Subsection \ref{sec:uncorgardlb} will hold even if $\kappa<0$. The results of Lemma \ref{lemma:negproblemma} then imply that for any $\kappa$ if
\begin{equation}
\alpha>\frac{1}{f_{gar}(\kappa)},\label{eq:condalphauncorlbnegkappa}
\end{equation}
then the problem in (\ref{eq:defprobucor1}) is infeasible with overwhelming probability. This resolves Talagrand's conjecture 8.4.4 from \cite{TalBook} in positive. Along the same lines, it partially answers the question (problem) 8.4.2 from \cite{TalBook} as well.

\section{Correlated Gardner problem}
\label{sec:corgard}

What we considered in the previous section is the standard Gardner problem or the standard spherical perceptron. Such a perceptron assume that all patterns (essentially rows of $H$) are uncorrelated. In \cite{Gar88} a correlated version of the problem was considered as well. The, following, relatively simple, type of correlations was analyzed: instead of assuming that al elements of $H$ are i.i.d. symmetric Bernoulli random variables, one can assume that each $H_{ij}$ is an asymmetric Bernoulli random variable. To be a bit more precise, the following type of asymmetry was considered:
\begin{eqnarray}
P(H_{ij}=1) & = & \frac{1+m_a}{2}\nonumber \\
P(H_{ij}=-1) & = & \frac{1-m_a}{2}.\label{eq:defasymgard}
\end{eqnarray}
In other words, each $H_{ij}$ was assumed to take value $1$ with probability $\frac{1+m_a}{2}$ and value $-1$ with probability $\frac{1-m_a}{2}$. Clearly, $0\leq m_a\leq 1$. If $m_a=0$ one has fully uncorrelated scenario (essentially, precisely the scenario considered in Section \ref{sec:uncorgard}. On the other hand, if $m_a=1$ one has fully correlated scenario where all patterns are basically equal to each other. Of course, one then wonders in what way the above introduced correlations impact the value of the storage capacity. The first observation is that as the correlation grow, i.e. as $m_a$ grows, one expects that the storage capacity should grow as well. Such a prediction was indeed confirmed through the analysis conducted in \cite{Gar88}. In fact, not only that such an expectations was confirmed, actually the exact changed in the storage capacity was quantified as well. In this section we will present a mathematically rigorous approach that will confirm the predictions given in \cite{Gar88}.

We start by recalling how the problems in (\ref{eq:defprobucor}) and (\ref{eq:defprobucor1}) transform when the patterns are correlated. Essentially instead of (\ref{eq:defprobucor}) one then looks at the following question: assuming that $\|\x\|_2=1$, how large $\alpha$ can be so that the following system of linear inequalities is satisfied with overwhelming probability
\begin{equation}
\mbox{diag}(H_{:,1}) H_{:,2:n}\x\geq \kappa,\label{eq:defprobcor}
\end{equation}
where $H_{:,1}$ is the first column of $H$ and $H_{:,2:n}$ are all columns of $H$ except column $1$. Also, $\mbox{diag}(H_{:,1})$ is a diagonal matrix with elements on the main diagonal being the elements of $H_{:,1}$. This of course is the same as if one asks how large $\alpha$ can be so that the following optimization problem is feasible with overwhelming probability
\begin{eqnarray}
& & \mbox{diag}(\q)H\x\geq \kappa\nonumber \\
& & \|\x\|_2=1,\label{eq:defprobcor1}
\end{eqnarray}
where elements of $\q$ and $H$ are i.i.d. asymmetric Bernoulli distributed according to (\ref{eq:defasymgard}). Also, the size of $H$ in (\ref{eq:defprobcor1}) should be $m\times (n-1)$. However, as in the previous section to make writing easier we will view it as an $m\times n$ matrix. Given that we will consider the large $n$ scenario this effectively changes nothing in the results that we will present.

Now, our strategy will be to condition on first solve the resulting problem one obtains after conditioning on $\q$. So, for the time being we will assume that $\q$ is a deterministic vector. Also, in such a scenario one can then replace the asymmetric Bernoulli random variables of $H$ by the appropriately adjusted Gaussian ones. We will not proof here that such a replacement is allowed. While we will towards the end of the paper say a few more words about it, here we just briefly mention that the proof of such a statement is not that hard since it relies on several routine techniques (see, e.g. \cite{Chatterjee06,Lindeberg22}). However, it is a bit tedious and in our opinion would significantly burden the presentation.

The adjustment to the Gaussian scenario can be done in the following way: one can assume that all components of $H$ are i.i.d. and that each of them (basically $H_{ij},1\leq i\leq m,1\leq j\leq n$) is an ${\cal N}(m_a,1-m_a^2)$. Alternatively one can assume that all components of $H$ are i.i.d. and that each of them is standard normal. Under such an assumption one then can rewrite (\ref{eq:defprobcor1}) in the following way
\begin{eqnarray}
& & \mbox{diag}(\q)(\sqrt{1-m_a^2}H+m_a)\x\geq \kappa\nonumber \\
& & \|\x\|_2=1.\label{eq:defprobcor2}
\end{eqnarray}
After further algebraic transformation one has the following feasibility problem
\begin{eqnarray}
& & \mbox{diag}(\q)H\x\geq \frac{\kappa \1-vm_a\q}{\sqrt{1-m_a^2}}\nonumber \\
& & \1^T\x=v\nonumber \\
& & \|\x\|_2=1.\label{eq:defprobcor3}
\end{eqnarray}
We should also recognize that that above feasibility problem can be rewritten as the following optimization problem
\begin{eqnarray}
\xi_{ncor}=\min_{\x} \max_{\lambda\geq 0} & &  \frac{\kappa\lambda^T\1-vm_a\lambda^T\q}{\sqrt{1-m_a^2}}- \lambda^T\mbox{diag}(\q) H\x \nonumber \\
\mbox{subject to} & & \1^T\x=v \nonumber \\
& & \|\lambda\|_2= 1\nonumber \\
& & \|\x\|_2=1.\label{eq:corminmax}
\end{eqnarray}
In what follows we will analyze the feasibility of (\ref{eq:defprobcor3}) by trying to follow closely what was presented in Subsection \ref{sec:uncorgardlb}. We will first present a mechanism that can be used to lower bound $\xi_{ncor}$ and then its a counterpart that can be used to upper bound a quantity similar to $\xi_{ncor}$ which basically has the same sign as $\xi_{ncor}$.

\subsection{Lower-bounding $\xi_{ncor}$}
\label{sec:corgardlb}

We will start with the following lemma (essentially a counterpart to Lemma \ref{lemma:neggenlemma}).
\begin{lemma}
Let $H$ be an $m\times n$ matrix with i.i.d. standard normal components. Let $\q$ be a fixed $n\times 1$ vector and let $\g$ and $\h$ be $m\times 1$ and $n\times 1$ vectors, respectively, with i.i.d. standard normal components. Also, let $g$ be a standard normal random variable and let $\zeta_{\lambda}$ be a function of $\x$. Then
\begin{equation}
\hspace{-.5in}P(\min_{\1^T\x=v,\|\x\|_2=1}\max_{\|\lambda\|_2=1,\lambda_i\geq 0}(-\lambda^T \mbox{diag}(\q)H\x+g-\zeta_{\lambda})\geq 0)\geq
P(\min_{\1^T\x=v,\|\x\|_2=1}\max_{\|\lambda\|_2=1,\lambda_i\geq 0}(\g^T\lambda \mbox{diag}(\q)+\h^T\x-\zeta_{\lambda})\geq 0).\label{eq:negproblemmacor}
\end{equation}\label{lemma:negproblemmacor}
\end{lemma}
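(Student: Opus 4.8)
The plan is to obtain this as a direct application of Gordon's comparison inequality, Theorem~\ref{thm:Gordonmesh1}, in exactly the way Lemma~\ref{lemma:negproblemma} was derived (compare Lemma~3.1 in \cite{Gordon88} and Lemma~7 in \cite{StojnicHopBnds10}); the only new ingredient is the diagonal matrix $\mbox{diag}(\q)$, and the whole content of the argument is to check that it does not spoil the three hypotheses of that theorem. Since $-H\stackrel{d}{=}H$ jointly in all entries and $-g\stackrel{d}{=}g$, $-\g\stackrel{d}{=}\g$, it is equivalent to compare the two centered Gaussian processes
\begin{equation*}
X_{\x,\lambda}=\lambda^T\mbox{diag}(\q)H\x+g\|\x\|_2\|\lambda\|_2,\qquad Y_{\x,\lambda}=\g^T\mbox{diag}(\q)\lambda+\h^T\x,
\end{equation*}
(the right-hand side of (\ref{eq:negproblemmacor}) being read as $\g^T\mbox{diag}(\q)\lambda+\h^T\x$), where $\x$ ranges over the compact set $\{\x:\ \1^T\x=v,\ \|\x\|_2=1\}$ and $\lambda$ over $\{\lambda:\ \|\lambda\|_2=1,\ \lambda_i\ge 0\}$. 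On these sets $\|\x\|_2=\|\lambda\|_2=1$, so $X_{\x,\lambda}$ coincides with $\lambda^T\mbox{diag}(\q)H\x+g$; attaching the norms to the $g$-term is merely bookkeeping that keeps the variances matched off the constraint set as well.

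Next I would verify the hypotheses of Theorem~\ref{thm:Gordonmesh1} with $Y$ in the role of the theorem's first process and $X$ in the role of its second process; this assignment is precisely what produces the inequality in the asserted direction, namely $P(\bigcap_{\x}\bigcup_{\lambda}\{Y_{\x,\lambda}\ge\zeta_{\x,\lambda}\})\le P(\bigcap_{\x}\bigcup_{\lambda}\{X_{\x,\lambda}\ge\zeta_{\x,\lambda}\})$, which is (\ref{eq:negproblemmacor}) after rewriting the min-max events as intersections of unions and recalling that $\zeta_\lambda$ is deterministic (it may depend on both $\x$ and $\lambda$, which Gordon's theorem allows). Using independence of $H,g,\g,\h$ one computes, on the index sets, $E X_{\x,\lambda}^2=\|\mbox{diag}(\q)\lambda\|_2^2+1=E Y_{\x,\lambda}^2$; for a common $\x$ and two multipliers $\lambda,\lambda'$ both cross-covariances equal $(\mbox{diag}(\q)\lambda)^T(\mbox{diag}(\q)\lambda')+1$, so hypothesis~2 holds with equality; and for $\x\neq\x'$ one gets $E X_{\x,\lambda}X_{\x',\lambda'}=ab+1$ and $E Y_{\x,\lambda}Y_{\x',\lambda'}=a+b$ with $a:=(\mbox{diag}(\q)\lambda)^T(\mbox{diag}(\q)\lambda')$ and $b:=\x^T\x'$, so hypothesis~3 reduces to $a+b\le ab+1$, i.e.\ $(1-a)(1-b)\ge 0$.

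That last inequality is the one place where the structure of $\q$ matters: $b=\x^T\x'\le\|\x\|_2\|\x'\|_2=1$, and since $\q\in\{-1,1\}^n$ in the correlated model (more generally any fixed $\q$ with $\|\q\|_\infty\le 1$) one has $\|\mbox{diag}(\q)\lambda\|_2\le\|\lambda\|_2=1$, whence by Cauchy--Schwarz $a\le\|\mbox{diag}(\q)\lambda\|_2\|\mbox{diag}(\q)\lambda'\|_2\le 1$; thus both factors are nonpositive and $(1-a)(1-b)\ge 0$. Granting the three hypotheses, Theorem~\ref{thm:Gordonmesh1} gives the desired probability inequality first on arbitrary finite nets of the two index sets, and one then passes to the continuous min-max by the standard argument — almost-sure sample-path continuity of the Gaussian processes on the compacta together with taking monotone limits of the net events, exactly as in \cite{Gordon88,StojnicHopBnds10,StojnicRegRndDlt10}. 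I do not expect a genuine obstacle here, since the computation is routine; the only points needing care are getting the role assignment (and the harmless sign flips) right so that the inequality points the correct way, and noticing that hypothesis~3 really does use $\|\q\|_\infty\le 1$, i.e.\ that $\mbox{diag}(\q)$ acts as a contraction on the $\lambda$-sphere.
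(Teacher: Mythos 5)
Your proposal is correct and is in substance the same argument the paper gives; the paper's one-line proof ("after recognizing that $\|\lambda^T\mbox{diag}(\q)\|_2=\|\lambda\|_2$, the difference changes nothing structurally") is exactly your observation that $\mbox{diag}(\q)$ acts as an isometry (in fact a contraction suffices) on the $\lambda$-sphere so the Gordon covariance hypotheses still hold, and you have simply written out the covariance computations and the sign-flip/role-assignment bookkeeping that the paper elides. One typo to fix: in the final step you meant that both factors $1-a$ and $1-b$ are nonnegative, not nonpositive.
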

\begin{proof}
The proof is basically similar to the proof of Lemma 3.1 in \cite{Gordon88} as well as to the proof of Lemma 7 in \cite{StojnicHopBnds10}. The only difference is the structure of the allowed sets of $\x$'s and $\lambda$'s. After recognizing that $\|\lambda^T\mbox{diag}(\q)\|_2=\|\lambda^T\|_2$, such a difference changes nothing structurally in the proof, though.
\end{proof}

Let $\zeta_{\lambda}=-\frac{\kappa\lambda^T\1-vm_a\lambda^T\q}{\sqrt{1-m_a^2}}+\epsilon_{5}^{(g)}\sqrt{n}+\xi_{ncor}^{(l)}$ with $\epsilon_{5}^{(g)}>0$ being an arbitrarily small constant independent of $n$. We will first look at the right-hand side of the inequality in (\ref{eq:negproblemmacor}). The following is then the probability of interest
\begin{equation}
P(\min_{\1^T\x=v,\|\x\|_2=1}\max_{\|\lambda\|_2=1,\lambda_i\geq 0}(\g^T\mbox{diag}(\q)\lambda+\h^T\x+\frac{\kappa\lambda^T\1-vm_a\lambda^T\q}{\sqrt{1-m_a^2}}-\epsilon_{5}^{(g)}\sqrt{n})\geq \xi_{ncor}^{(l)}).\label{eq:negprobanal0cor}
\end{equation}
After solving the minimization over $\x$ and the maximization over $\lambda$ one obtains
\begin{multline}
P(\min_{\1^T\x=v,\|\x\|_2=1}\max_{\|\lambda\|_2=1,\lambda_i\geq 0}(\g^T\mbox{diag}(\q)\lambda+\h^T\x+\frac{\kappa\lambda^T\1-vm_a\lambda^T\q}{\sqrt{1-m_a^2}}-\epsilon_{5}^{(g)}\sqrt{n})\geq \xi_{ncor}^{(l)})\\\geq P(\min_v(\|(\mbox{diag}(\q)\g+\frac{\kappa\1-vm_a\q}{\sqrt{1-m_a^2}})_+\|_2-\|\h_i\|_2-\epsilon_{5}^{(g)}\sqrt{n})\geq \xi_{ncor}^{(l)}),\label{eq:negprobanal1cor}
\end{multline}
where $(\mbox{diag}(\q)\g+\frac{\kappa\1-vm_a\q}{\sqrt{1-m_a^2}})_+$ is $(\mbox{diag}(\q)\g+\frac{\kappa\1-vm_a\q}{\sqrt{1-m_a^2}})$ vector with negative components replaced by zeros. As in Subsection \ref{sec:uncorgardlb}, since $\h$ is a vector of $n$ i.i.d. standard normal variables it is rather trivial that
\begin{equation}
P(\|\h\|_2<(1+\epsilon_{1}^{(n)})\sqrt{n})\geq 1-e^{-\epsilon_{2}^{(n)} n},\label{eq:devhcor}
\end{equation}
where $\epsilon_{1}^{(n)}>0$ is an arbitrarily small constant and $\epsilon_{2}^{(n)}$ is a constant dependent on $\epsilon_{1}^{(n)}$ but independent of $n$. Along the same lines, since $\g$ is a vector of $m$ i.i.d. standard normal random variables and $\q$ is a vector of $m$ i.i.d. asymmetric Bernouilli random variables one has
\begin{equation}
\min_v(E\sum_{i=1}^{m}(\max\{\g_i\q_i+\frac{\kappa-vm_a\q_i}{\sqrt{1-m_a^2}},0\})^2)=mf_{gar}^{(cor)}(\kappa),\label{eq:efgarcor}
\end{equation}
where the randomness is over both $\g$ and $\q$ and
\begin{multline}
f_{gar}^{(cor)}(\kappa)=\min_v( \frac{1+m_a}{2}\left ( \frac{1}{\sqrt{2\pi}}\int_{-\frac{\kappa-vm_a}{\sqrt{1-m_a^2}}}^{\infty}
\left (\g_i+\frac{\kappa-vm_a}{\sqrt{1-m_a^2}}\right )^2e^{-\frac{\g_i^2}{2}}d\g_i\right )
\\+\frac{1-m_a}{2}\left ( \frac{1}{\sqrt{2\pi}}\int_{-\infty}^{\frac{\kappa+vm_a}{\sqrt{1-m_a^2}}}
\left (-\g_i+\frac{\kappa+vm_a}{\sqrt{1-m_a^2}}\right )^2e^{-\frac{\g_i^2}{2}}d\g_i \right ) ).\label{eq:fgarcor}
\end{multline}
Since optimal $v$ will concentrate one also has
\begin{equation}
P\left (\sqrt{\min_v(\sum_{i=1}^{m}(\max\{\g_i\q_i+\frac{\kappa-vm_a\q_i}{\sqrt{1-m_a^2}},0\})^2)}>(1-\epsilon_{1}^{(m,cor)})\sqrt{mf_{gar}^{(cor)}(\kappa)}\right )\geq 1-e^{-\epsilon_{2}^{(m,cor)} m},\label{eq:devgcor}
\end{equation}
where $\epsilon_{1}^{(m,cor)}>0$ is an arbitrarily small constant and analogously as above $\epsilon_{2}^{(m,cor)}$ is a constant dependent on $\epsilon_{1}^{(m,cor)}$ and
$f_{gar}^{(cor)}(\kappa)$ but independent of $n$. Then a combination of (\ref{eq:negprobanal1cor}), (\ref{eq:devhcor}), and (\ref{eq:devgcor}) gives
\begin{multline}
P(\min_{\1^T\x=v,\|\x\|_2=1}\max_{\|\lambda\|_2=1,\lambda_i\geq 0}(\g^T\mbox{diag}(\q)\lambda+\h^T\x+\frac{\kappa\lambda^T\1-vm_a\lambda^T\q}{\sqrt{1-m_a^2}}-\epsilon_{5}^{(g)}\sqrt{n})\geq \xi_{ncor}^{(l)})\\\geq
(1-e^{-\epsilon_{2}^{(m,cor)} m})(1-e^{-\epsilon_{2}^{(n)} n})
P((1-\epsilon_{1}^{(m,cor)})\sqrt{mf_{gar}^{(cor)}(\kappa)}-(1+\epsilon_{1}^{(n)})\sqrt{n}-\epsilon_{5}^{(g)}\sqrt{n}\geq \xi_{ncor}^{(l)}).
\label{eq:negprobanal2cor}
\end{multline}
If
\begin{eqnarray}
& & (1-\epsilon_{1}^{(m,cor)})\sqrt{mf_{gar}^{(cor)}(\kappa)}-(1+\epsilon_{1}^{(n)})\sqrt{n}-\epsilon_{5}^{(g)}\sqrt{n}>\xi_{ncor}^{(l)}\nonumber \\
& \Leftrightarrow & (1-\epsilon_{1}^{(m,cor)})\sqrt{\alpha f_{gar}^{(cor)}(\kappa)}-(1+\epsilon_{1}^{(n)})-\epsilon_{5}^{(g)}>\frac{\xi_{ncor}^{(l)}}{\sqrt{n}},\label{eq:negcondxipucor}
\end{eqnarray}
one then has from (\ref{eq:negprobanal2cor})
\begin{equation}
P(\min_{\1^T\x=v,\|\x\|_2=1}\max_{\|\lambda\|_2=1,\lambda_i\geq 0}(\g^T\mbox{diag}(\q)\lambda+\h^T\x+\frac{\kappa\lambda^T\1-vm_a\lambda^T\q}{\sqrt{1-m_a^2}}-\epsilon_{5}^{(g)}\sqrt{n})\geq \xi_{ncor}^{(l)})\geq 1.\label{eq:negprobanal3cor}
\end{equation}

We will now look at the left-hand side of the inequality in (\ref{eq:negproblemma}). The following is then the probability of interest
\begin{equation}
P(\min_{\1^T\x=v,\|\x\|_2=1}\max_{\|\lambda\|_2=1,\lambda_i\geq 0}(\frac{\kappa\lambda^T\1-vm_a\lambda^T\q}{\sqrt{1-m_a^2}}-\lambda^T\mbox{diag}(\q)H\x+g-\epsilon_{5}^{(g)}\sqrt{n}-\xi_{ncor}^{(l)})\geq 0).\label{eq:leftnegprobanal0cor}
\end{equation}
As in Subsection \ref{sec:uncorgardlb}, since $P(g\geq\epsilon_{5}^{(g)}\sqrt{n})<e^{-\epsilon_{6}^{(g)} n}$ (where $\epsilon_{6}^{(g)}$ is, as all other $\epsilon$'s in this paper are, independent of $n$) from (\ref{eq:leftnegprobanal0cor}) we have
\begin{multline}
P(\min_{\1^T\x=v,\|\x\|_2=1}\max_{\|\lambda\|_2=1,\lambda_i\geq 0}(\frac{\kappa\lambda^T\1-vm_a\lambda^T\q}{\sqrt{1-m_a^2}}-\lambda^T\mbox{diag}(\q)H\x+g-\epsilon_{5}^{(g)}\sqrt{n}-\xi_{ncor}^{(l)})\geq 0)\\\leq P(\min_{\1^T\x=v,\|\x\|_2=1}\max_{\|\lambda\|_2=1,\lambda_i\geq 0}(\frac{\kappa\lambda^T\1-vm_a\lambda^T\q}{\sqrt{1-m_a^2}}-\lambda^T\mbox{diag}(\q)H\x-\xi_{ncor}^{(l)})\geq 0)+e^{-\epsilon_{6}^{(g)} n}.
\label{eq:leftnegprobanal1cor}
\end{multline}
When $n$ is large from (\ref{eq:leftnegprobanal1cor}) we then have
\begin{multline}
\hspace{-.7in}\lim_{n\rightarrow \infty}P(\min_{\1^T\x=v,\|\x\|_2=1}\max_{\|\lambda\|_2=1,\lambda_i\geq 0}(\frac{\kappa\lambda^T\1-vm_a\lambda^T\q}{\sqrt{1-m_a^2}}-\lambda^T\mbox{diag}(\q)H\x+g-\epsilon_{5}^{(g)}\sqrt{n}-\xi_{ncor}^{(l)})\geq 0)\\
\leq  \lim_{n\rightarrow\infty}P(\min_{\1^T\x=v,\|\x\|_2=1}\max_{\|\lambda\|_2=1,\lambda_i\geq 0}(\frac{\kappa\lambda^T\1-vm_a\lambda^T\q}{\sqrt{1-m_a^2}}-\lambda^T\mbox{diag}(\q)H\x)\geq \xi_{ncor}^{(l)}).\label{eq:leftnegprobanal2cor}
\end{multline}
Assuming that (\ref{eq:negcondxipucor}) holds, then a combination of (\ref{eq:negproblemmacor}), (\ref{eq:negprobanal3cor}), and (\ref{eq:leftnegprobanal2cor}) gives
\begin{multline}
\lim_{n\rightarrow\infty}\lim_{n\rightarrow\infty}P(\min_{\1^T\x=v,\|\x\|_2=1}\max_{\|\lambda\|_2=1,\lambda_i\geq 0}(\frac{\kappa\lambda^T\1-vm_a\lambda^T\q}{\sqrt{1-m_a^2}}-\lambda^T\mbox{diag}(\q)H\x)\geq \xi_{ncor}^{(l)})\\\geq P(\min_{\1^T\x=v,\|\x\|_2=1}\max_{\|\lambda\|_2=1,\lambda_i\geq 0}(\g^T\mbox{diag}(\q)\lambda+\h^T\x+\frac{\kappa\lambda^T\1-vm_a\lambda^T\q}{\sqrt{1-m_a^2}}-\epsilon_{5}^{(g)}\sqrt{n})\geq \xi_{ncor}^{(l)})\geq 1.\label{eq:leftnegprobanal3cor}
\end{multline}

We summarize our results from this subsection in the following lemma.

\begin{lemma}
Let $H$ be an $m\times n$ matrix with i.i.d. standard normal components. Further, let $m_a$ be a constant such that $m_a\in[0,1]$ and let $\q$ be an $m\times 1$ vector of i.i.d. asymmetric Bernoulli random variables defined in the following way:
\begin{eqnarray}
P(\q_i=1) & = & \frac{1+m_a}{2}\nonumber \\
P(\q_i=-1) & = & \frac{1-m_a}{2}.\label{eq:defqlemmacorlb}
\end{eqnarray}
Let $n$ be large and let $m=\alpha n$, where $\alpha>0$ is a constant independent of $n$. Let $\xi_{ncor}$ be as in (\ref{eq:corminmax}) and let $\kappa\geq 0$ be a scalar constant independent of $n$. Let all $\epsilon$'s be arbitrarily small constants independent of $n$. Further, let $\g_i$ be a standard normal random variable and set
\begin{multline}
f_{gar}^{(cor)}(\kappa)=\min_v( \frac{1+m_a}{2}\left ( \frac{1}{\sqrt{2\pi}}\int_{-\frac{\kappa-vm_a}{\sqrt{1-m_a^2}}}^{\infty}
\left (\g_i+\frac{\kappa-vm_a}{\sqrt{1-m_a^2}}\right )^2e^{-\frac{\g_i^2}{2}}d\g_i\right )
\\+\frac{1-m_a}{2}\left ( \frac{1}{\sqrt{2\pi}}\int_{-\infty}^{\frac{\kappa+vm_a}{\sqrt{1-m_a^2}}}
\left (-\g_i+\frac{\kappa+vm_a}{\sqrt{1-m_a^2}}\right )^2e^{-\frac{\g_i^2}{2}}d\g_i \right ) ).
\label{eq:fgarlemmacorlb}
\end{multline}
Let $\xi_{ncor}^{(l)}$ be a scalar such that
\begin{equation}
(1-\epsilon_{1}^{(m,cor)})\sqrt{\alpha f_{gar}^{(cor)}(\kappa)}-(1+\epsilon_{1}^{(n)})-\epsilon_{5}^{(g)}>\frac{\xi_{ncor}^{(l)}}{\sqrt{n}}.\label{eq:negcondxipuneggenlemmacor}
\end{equation}
Then
\begin{equation}
 \lim_{n\rightarrow\infty}P(\xi_{ncor}\geq \xi_{ncor}^{(l)})=\lim_{n\rightarrow\infty}P(\min_{\1^T\x=v,\|\x\|_2=1}\max_{\|\lambda\|_2=1,\lambda_i\geq 0}(\frac{\kappa\lambda^T\1-vm_a\lambda^T\q}{\sqrt{1-m_a^2}}-\lambda^T \mbox{diag}(\q)H\x)\geq \xi_{ncor}^{(l)})\geq 1. \label{eq:neggenproblemmacor}
\end{equation}
\label{lemma:neggenlemmalbcor}
\end{lemma}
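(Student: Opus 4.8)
The plan is to transcribe, almost line for line, the argument of Subsection \ref{sec:uncorgardlb} that yielded Lemma \ref{lemma:neggenlemma}, now carrying along the auxiliary scalar $v=\1^T\x$ and the diagonal sign pattern $\mbox{diag}(\q)$. The starting point is Lemma \ref{lemma:negproblemmacor}, the Gordon comparison (Theorem \ref{thm:Gordonmesh1}) specialized to the present min-max over the index set $\{\lambda:\|\lambda\|_2=1,\lambda_i\geq 0\}\times\{\x:\1^T\x=v,\|\x\|_2=1\}$; the only new ingredient relative to the uncorrelated case is the diagonal matrix $\mbox{diag}(\q)$, and since $\q$ has entries in $\{-1,1\}$ one has $\|\lambda^T\mbox{diag}(\q)\|_2=\|\lambda\|_2$, so the variance matching and the covariance comparisons required by Gordon's inequality are unchanged. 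Throughout I would condition on $\q$, so that the lemma applies verbatim for each realization, and bring back the randomness of $\q$ only in the concentration steps below.

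First I would put $\zeta_{\lambda}=-\frac{\kappa\lambda^T\1-vm_a\lambda^T\q}{\sqrt{1-m_a^2}}+\epsilon_{5}^{(g)}\sqrt{n}+\xi_{ncor}^{(l)}$ and work on the right-hand side of (\ref{eq:negproblemmacor}). Carrying out the maximization over $\lambda$ (which returns $\|(\mbox{diag}(\q)\g+\frac{\kappa\1-vm_a\q}{\sqrt{1-m_a^2}})_+\|_2$) and the minimization over $\x$, and relaxing the $\h^T\x$ term by dropping the affine slice $\1^T\x=v$ so that it contributes at least $-\|\h\|_2$, reduces the expression to the one-scalar optimization recorded in (\ref{eq:negprobanal1cor}). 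Then I would invoke the $\chi$-concentration (\ref{eq:devhcor}) for $\|\h\|_2$ together with the concentration (\ref{eq:devgcor}) of $\sqrt{\min_v\sum_{i}(\max\{\g_i\q_i+\frac{\kappa-vm_a\q_i}{\sqrt{1-m_a^2}},0\})^2}$ around $\sqrt{mf_{gar}^{(cor)}(\kappa)}$; conditioning on $\q$ makes each summand a one-dimensional Gaussian functional, the mean in (\ref{eq:efgarcor}) splits according to the two Bernoulli outcomes of $\q_i$ into the two integrals appearing in (\ref{eq:fgarcor}), and the outer $\min_v$ passes through because the optimizing $v$ concentrates. Condition (\ref{eq:negcondxipuneggenlemmacor}) is precisely what makes the resulting deterministic inequality strict, so the right-hand side of the Gordon bound tends to $1$, i.e. (\ref{eq:negprobanal3cor}) holds.

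Next I would dispose of the left-hand side of (\ref{eq:negproblemmacor}): the only term there not already present in the quantity of interest is the stray standard normal $g$, and since $P(g\geq\epsilon_{5}^{(g)}\sqrt{n})<e^{-\epsilon_{6}^{(g)}n}$ it may be discarded at exponentially small cost, exactly as in (\ref{eq:leftnegprobanal1cor})--(\ref{eq:leftnegprobanal2cor}). Chaining Lemma \ref{lemma:negproblemmacor}, the conclusion (\ref{eq:negprobanal3cor}) for the right-hand side, and this reduction of the left-hand side yields (\ref{eq:leftnegprobanal3cor}), which is (\ref{eq:neggenproblemmacor}); the identification $\lim_n P(\xi_{ncor}\geq\xi_{ncor}^{(l)})=\lim_n P(\min_{\1^T\x=v,\|\x\|_2=1}\max_{\|\lambda\|_2=1,\lambda_i\geq 0}(\frac{\kappa\lambda^T\1-vm_a\lambda^T\q}{\sqrt{1-m_a^2}}-\lambda^T\mbox{diag}(\q)H\x)\geq\xi_{ncor}^{(l)})$ is just the definition (\ref{eq:corminmax}) of $\xi_{ncor}$.

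The step I expect to require the most genuine care, and the one distinguishing this from the uncorrelated proof, is the handling of the auxiliary scalar $v$. Two points must be checked: that the $\x$-minimization at a fixed value of $v=\1^T\x$ really does collapse to the Euclidean computation used above (it does, since $\x$ enters the objective only through $\h^T\x$, which is bounded below on the sphere-minus-hyperplane by $-\|\h\|_2$, and through the explicit $v$-dependence already carried in $\zeta_\lambda$ and in the $\lambda$-maximum); and that exchanging $\min_v$ with the concentration estimate is legitimate. For the latter, $v$ ranges over the compact interval $[-\sqrt{n},\sqrt{n}]$, the relevant objective is Lipschitz in $v$ with a polynomial-in-$n$ constant and is, for each fixed $v$, concentrated about its mean with an exponential rate, so \emph{uniform} concentration over $v$ follows from a union bound over a sufficiently fine net whose cardinality is polynomial in $n$ and is therefore absorbed into the error terms $e^{-\epsilon_{2}^{(m,cor)}m}$ and $e^{-\epsilon_{2}^{(n)}n}$. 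Everything else is a routine copy of Subsection \ref{sec:uncorgardlb}.
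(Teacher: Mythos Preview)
Your proposal is correct and follows essentially the same route as the paper: apply Lemma \ref{lemma:negproblemmacor} with the same choice of $\zeta_\lambda$, relax the $\x$-minimization by dropping the affine slice to get $-\|\h\|_2$, reduce the $\lambda$-maximization to $\|(\mbox{diag}(\q)\g+\frac{\kappa\1-vm_a\q}{\sqrt{1-m_a^2}})_+\|_2$, invoke concentration for both pieces, and discard the stray $g$ on the left-hand side. Your added care about uniform concentration over $v$ via a net argument makes explicit a point the paper handles only with the remark ``since optimal $v$ will concentrate,'' but otherwise the arguments coincide.
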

\begin{proof}
The proof follows from the above discussion, (\ref{eq:negproblemmacor}), and (\ref{eq:leftnegprobanal3cor}).
\end{proof}
One again can be a bit more informal and (essentially ignoring all technicalities and $\epsilon$'s) have that as long as
\begin{equation}
\alpha>\frac{1}{f_{gar}^{(cor)}(\kappa)},\label{eq:condalphacorlb}
\end{equation}
the problem in (\ref{eq:defprobcor1}) will be infeasible with overwhelming probability. Also, the above lemma establishes in a mathematically rigorous way that the critical storage capacity is indeed upper bounded as predicted in \cite{Gar88}.

\subsection{Upper-bounding (the sign of) $\xi_{ncor}$}
\label{sec:corgardub}

In the previous subsection we designed a lower bound on $\xi_{ncor}$ which then helped us determine an upper bound on the critical storage capacity $\alpha_c$ (essentially the one given in (\ref{eq:condalphacorlb})). Similarly to what was done in Subsection \ref{sec:uncorgardub} where we presented a mechanism to upper bound a quantity similar to $\xi_n$,  in this subsection we will provide a mechanism that can be used to upper bound a quantity similar to $\xi_{ncor}$ (which will maintain the sign of $\xi_{ncor}$). Such an upper bound then can be used to obtain a lower bound on the critical storage capacity $\alpha_c$ when the patterns are correlated. As mentioned above, we start by looking at a quantity very similar to $\xi_{ncor}$. First, we recognize that when $\kappa>vm_a$ one can alternatively rewrite the feasibility problem from (\ref{eq:defprobcor1}) in the following way
\begin{eqnarray}
& & \mbox{diag}(\q)H\x\geq \frac{\kappa\1-vm_a\q}{\sqrt{1-m_a^2}}\nonumber \\
& & \1^T\x=v\nonumber \\
& & \|\x\|_2\leq 1.\label{eq:defprobcor1ub}
\end{eqnarray}
For our needs in this subsection, the feasibility problem in (\ref{eq:defprobcor1ub}) can be formulated as the following optimization problem
\begin{eqnarray}
\xi_{nrcor}=\min_{\x} \max_{\lambda\geq 0} & &  \frac{\kappa\lambda^T\1-vm_a\lambda^T\q}{\sqrt{1-m_a^2}}- \lambda^T \mbox{diag}(\q) H\x \nonumber \\
\mbox{subject to} & & \1^T\x=v\nonumber \\
& & \|\lambda\|_2\leq 1\nonumber \\
& & \|\x\|_2\leq 1.\label{eq:corminmax}
\end{eqnarray}
For (\ref{eq:defprobcor1ub}) to be infeasible one has to have $\xi_{nr}>0$. Using duality one has
\begin{eqnarray}
\xi_{nrcor}=\max_{\lambda\geq 0} \min_{\x}  & &  \frac{\kappa\lambda^T\1-vm_a\lambda^T\q}{\sqrt{1-m_a^2}}- \lambda^T \mbox{diag}(\q) H\x \nonumber \\
\mbox{subject to} & & \1^T\x=v\nonumber \\
& & \|\lambda\|_2\leq 1\nonumber \\
& & \|\x\|_2\leq 1.\label{eq:cormaxmin}
\end{eqnarray}
and alternatively
\begin{eqnarray}
-\xi_{nrcor}=\min_{\lambda\geq 0} \max_{\x}  & &  -\frac{\kappa\lambda^T\1-vm_a\lambda^T\q}{\sqrt{1-m_a^2}}+ \lambda^T \mbox{diag}(\q) H\x \nonumber \\
\mbox{subject to} & & \1^T\x=v\nonumber \\
& & \|\lambda\|_2\leq 1\nonumber \\
& & \|\x\|_2\leq 1.\label{eq:cormaxminneg}
\end{eqnarray}
We will now proceed in a fashion similar to the on presented in Subsections \ref{sec:uncorgardub} and \ref{sec:corgardlb}. We will make use of the following lemma (the lemma is fairly similar to Lemma \ref{lemma:negproblemmacor} and of course fairly similar to Lemma 3.1 in \cite{Gordon88}; see also \cite{StojnicHopBnds10} for similar considerations).
\begin{lemma}
Let $H$ be an $m\times n$ matrix with i.i.d. standard normal components. Let $\g$ and $\h$ be $m\times 1$ and $n\times 1$ vectors, respectively, with i.i.d. standard normal components. Also, let $g$ be a standard normal random variable and let $\zeta_{\lambda}$ be a function of $\x$. Then
\begin{multline}
P(\min_{\|\lambda\|_2\leq 1,\lambda_i\geq 0}\max_{\1^T\x=v,\|\x\|_2\leq 1}(\lambda^T\mbox{diag}(\q) H\x+g\|\lambda\|_2\|\x\|_2-\zeta_{\lambda})\geq 0)\\\geq
P(\min_{\|\lambda\|_2\leq 1,\lambda_i\geq 0}\max_{\1^T\x=v,\|\x\|_2\leq 1}(\|\x\|_2\g^T\mbox{diag}(\q)\lambda+\|\lambda\|_2\h^T\x-\zeta_{\lambda})\geq 0).\label{eq:negproblemmaubcor}
\end{multline}\label{lemma:negproblemmaubcor}
\end{lemma}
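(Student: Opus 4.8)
The plan is to deduce (\ref{eq:negproblemmaubcor}) from Gordon's comparison inequality, Theorem \ref{thm:Gordonmesh1}, in exactly the same way as in the proofs of Lemmas \ref{lemma:negproblemmaub} and \ref{lemma:negproblemmacor}; the only new features are the multiplication by $\mbox{diag}(\q)$ and the affine restriction $\1^T\x=v$ imposed on the inner maximization, and I will argue that neither changes the structure of the argument. I would index two centered Gaussian processes by the pair $(\lambda,\x)$ ranging over the compact set $\{\|\lambda\|_2\le 1,\ \lambda_i\ge 0\}\times\{\1^T\x=v,\ \|\x\|_2\le 1\}$, with $\lambda$ playing the role of the ``outer'' index of Theorem \ref{thm:Gordonmesh1} (the one minimized over) and $\x$ the role of the ``inner'' index (the one maximized over), and set
\begin{equation*}
X_{\lambda,\x}=\|\x\|_2\g^T\mbox{diag}(\q)\lambda+\|\lambda\|_2\h^T\x,\qquad Y_{\lambda,\x}=\lambda^T\mbox{diag}(\q)H\x+g\|\lambda\|_2\|\x\|_2 .
\end{equation*}
To land on the direction of the inequality claimed in (\ref{eq:negproblemmaubcor}) --- with the $H$-process probability on the larger side --- one applies Theorem \ref{thm:Gordonmesh1} with the $(\g,\h)$-process playing the role of ``$X$'' and the $H$-process playing the role of ``$Y$'', so that the conclusion $P(\bigcap_\lambda\bigcup_\x(X_{\lambda,\x}\ge\zeta_\lambda))\le P(\bigcap_\lambda\bigcup_\x(Y_{\lambda,\x}\ge\zeta_\lambda))$ is exactly (\ref{eq:negproblemmaubcor}).

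The core of the proof is the verification of the three covariance hypotheses, and this is where $\q_i^2=1$ (hence $\mbox{diag}(\q)$ orthogonal, $\|\mbox{diag}(\q)\lambda\|_2=\|\lambda\|_2$) enters; equivalently, one may note that $\mbox{diag}(\q)H$ and $\mbox{diag}(\q)\g$ have the same distributions as $H$ and $\g$ (flipping signs of rows, resp. coordinates, of i.i.d. standard normals), which already reduces (\ref{eq:negproblemmaubcor}) to the version of Lemma \ref{lemma:negproblemmaub} with the extra constraint $\1^T\x=v$. A direct second-moment computation using independence and standard-normality of the entries of $H,\g,\h$ and of the scalar $g$ gives
\begin{equation*}
E(Y_{\lambda,\x}Y_{\mu,\y})=(\lambda^T\mu)(\x^T\y)+\|\lambda\|_2\|\mu\|_2\|\x\|_2\|\y\|_2,\qquad E(X_{\lambda,\x}X_{\mu,\y})=\|\x\|_2\|\y\|_2(\lambda^T\mu)+\|\lambda\|_2\|\mu\|_2(\x^T\y).
\end{equation*}
Putting $\mu=\lambda,\ \y=\x$ shows both processes have variance $2\|\lambda\|_2^2\|\x\|_2^2$ (hypothesis 1). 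Putting $\mu=\lambda$ but $\y\ne\x$ shows hypothesis 2, in fact with equality, since the two displayed expressions coincide when $\mu=\lambda$. For hypothesis 3 ($\lambda\ne\mu$) one needs $E(X_{\lambda,\x}X_{\mu,\y})\le E(Y_{\lambda,\x}Y_{\mu,\y})$, which rearranges to
\begin{equation*}
(\|\x\|_2\|\y\|_2-\x^T\y)(\|\lambda\|_2\|\mu\|_2-\lambda^T\mu)\ge 0 ,
\end{equation*}
a product of two nonnegative Cauchy--Schwarz gaps, hence valid for all admissible $\lambda,\mu,\x,\y$.

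It remains to convert the conclusion of Theorem \ref{thm:Gordonmesh1} into the probability statement of the lemma. For any threshold family indexed by $(\lambda,\x)$ --- here $\zeta_\lambda$ as prescribed just before the lemma --- the event $\bigcap_\lambda\bigcup_\x\{Z_{\lambda,\x}\ge\zeta_\lambda\}$ equals $\{\min_\lambda\max_\x(Z_{\lambda,\x}-\zeta_\lambda)\ge 0\}$, so the comparison above yields (\ref{eq:negproblemmaubcor}) at once. The one genuine technicality, which I would handle exactly as in the proofs of Lemmas \ref{lemma:negproblemma}, \ref{lemma:negproblemmaub} and \ref{lemma:negproblemmacor}, is that Theorem \ref{thm:Gordonmesh1} is stated for finite index sets, so one passes to the continuum over the compact set above by a routine separability/continuity (countable dense subset) argument; the extra constraint $\1^T\x=v$ merely shrinks the $\x$-index set and is inert in all the covariance computations. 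I expect the only places needing care to be this finite-to-compact passage and the bookkeeping that assigns the $(\g,\h)$-process and the $H$-process to the correct slots of Theorem \ref{thm:Gordonmesh1} so that the inequality points the right way; the covariance algebra itself is routine once $\q_i^2=1$ is invoked.
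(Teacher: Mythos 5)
Your proposal is correct and is essentially the same argument the paper invokes: the paper's proof is a one-line pointer back through Lemma \ref{lemma:negproblemma} to Gordon's Lemma 3.1 in \cite{Gordon88}, noting only that $\mbox{diag}(\q)$ is norm-preserving and that the extra constraints merely restrict the index sets, while you spell out the covariance verification (the Cauchy--Schwarz product form of condition 3, the equality in condition 2, and the role of $\q_i^2=1$) that the paper takes for granted.
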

\begin{proof}
The discussion related to the proof of Lemma \ref{lemma:negproblemma} applies here as well.
\end{proof}

Let $\zeta_{\lambda}=\frac{\kappa\lambda^T\1-vm_a\lambda^T\q}{\sqrt{1-m_a^2}}+\epsilon_{5}^{(g)}\sqrt{n}\|\lambda\|_2\|\x\|_2$ with $\epsilon_{5}^{(g)}>0$ being an arbitrarily small constant independent of $n$. We will follow the strategy of the previous subsection and start by first looking at the right-hand side of the inequality in (\ref{eq:negproblemmaubcor}). The following is then the probability of interest
\begin{equation}
P(\min_{\|\lambda\|_2\leq 1,\lambda_i\geq 0,\lambda\neq 0}\max_{\1^T\x=v,\|\x\|_2\leq 1}(\|\x\|_2\g^T\mbox{diag}(\q)\lambda+\|\lambda\|_2\h^T\x-\frac{\kappa\lambda^T\1-vm_a\lambda^T\q}{\sqrt{1-m_a^2}}-\epsilon_{5}^{(g)}
\sqrt{n}\|\lambda\|_2\|\x\|_2)>0),\label{eq:negprobanal0ubcor}
\end{equation}
where, similarly to what was done in Subsection \ref{sec:uncorgardub}, for the easiness of writing we removed possibility $\lambda=0$ (also, as earlier, such a case contributes in no way to the possibility that $-\xi_{nrcor}<0)$. After solving the minimization over $\x$ and the maximization over $\lambda$ one obtains
\begin{multline}
P(\min_{\|\lambda\|_2\leq 1,\lambda_i\geq 0,\lambda\neq 0}\max_{\1^T\x=v,\|\x\|_2\leq 1}(\|\x\|_2\g^T\lambda+\|\lambda\|_2\h^T\x-\frac{\kappa\lambda^T\1-vm_a\lambda^T\q}{\sqrt{1-m_a^2}}-\epsilon_{5}^{(g)}\sqrt{n}\|\lambda\|_2\|\x\|_2)> 0)\\=P(\min_{\|\lambda\|_2\leq 1,\lambda_i\geq 0,\lambda\neq 0}(\max(0,f(\h,v)\|\lambda\|_2+\g^T\mbox{diag}(\q)\lambda-\epsilon_{5}^{(g)}\sqrt{n}\|\lambda\|_2)-\frac{\kappa\lambda^T\1-vm_a\lambda^T\q}{\sqrt{1-m_a^2}})>0),
\label{eq:negprobanal1ubcor}
\end{multline}
where
\begin{equation}
f(\h,v)=\min_{\1^T\x=v,\|\x\|_2\leq 1}\h^T\x.\label{eq:deffhvcor}
\end{equation}
Now, we will for a moment assume that $m$ and $n$ are such that
\begin{equation}
\lim_{n\rightarrow\infty}P(\min_{\|\lambda\|_2\leq 1,\lambda_i\geq 0,\lambda\neq 0}(f(\h,v)\|\lambda\|_2+\g^T\mbox{diag}(\q)\lambda-\epsilon_{5}^{(g)}\sqrt{n}\|\lambda\|_2-\frac{\kappa\lambda^T\1-vm_a\lambda^T\q}{\sqrt{1-m_a^2}})> 0)=1.\label{eq:negprobanal2ubcor}
\end{equation}
That would also imply that
\begin{equation}
\lim_{n\rightarrow\infty}P(\min_{\|\lambda\|_2\leq 1,\lambda_i\geq 0,\lambda\neq 0}(\max(0,f(\h,v)\|\lambda\|_2+\g^T\mbox{diag}(\q)\lambda-\epsilon_{5}^{(g)}\sqrt{n}\|\lambda\|_2)-\frac{\kappa\lambda^T\1-vm_a\lambda^T\q}{\sqrt{1-m_a^2}})>0)
=1.\label{eq:negprobanal3ubcor}
\end{equation}
What is then left to be done is to determine an $\alpha=\frac{m}{n}$ such that (\ref{eq:negprobanal2ubcor}) holds. One then easily has
\begin{multline}
P(\min_{\|\lambda\|_2\leq 1,\lambda_i\geq 0,\lambda\neq 0}(f(\h,v)\|\lambda\|_2+\g^T \mbox{diag}(\q)\lambda-\epsilon_{5}^{(g)}\sqrt{n}\|\lambda\|_2-\frac{\kappa\lambda^T\1-vm_a\lambda^T\q}{\sqrt{1-m_a^2}})> 0)\\=
P(\min_{\|\lambda\|_2\leq 1,\lambda_i\geq 0,\lambda\neq 0}\|\lambda\|_2(f(\h,v)-\|(\mbox{diag}(\q)\g-\frac{\kappa\lambda^T\1-vm_a\lambda^T\q}{\sqrt{1-m_a^2}})_-\|_2-\epsilon_{5}^{(g)}\sqrt{n})> 0),\label{eq:negprobanal4ubcor}
\end{multline}
where similarly to what we had in the previous subsection $(\mbox{diag}(\q)\g-\frac{\kappa\lambda^T\1-vm_a\lambda^T\q}{\sqrt{1-m_a^2}})_-$ is $(\mbox{diag}(\q)\g-\frac{\kappa\lambda^T\1-vm_a\lambda^T\q}{\sqrt{1-m_a^2}})$ vector with positive components replaced by zeros. Similarly to what we had in the previous subsection, since $\g$ is a vector of $m$ i.i.d. standard normal variables it easily follows that
\begin{equation}
\min_v(E\sum_{i=1}^{m}(\min\{\g_i\q_i-\frac{\kappa\lambda^T\1-vm_a\lambda^T\q}{\sqrt{1-m_a^2}},0\})^2)=mf_{gar}^{(cor)}(\kappa),\label{eq:efgarubcor}
\end{equation}
where we recall
\begin{multline}
f_{gar}^{(cor)}(\kappa)=\min_v( \frac{1+m_a}{2}\left ( \frac{1}{\sqrt{2\pi}}\int_{-\infty}^{\frac{\kappa-vm_a}{\sqrt{1-m_a^2}}}
\left (\g_i-\frac{\kappa-vm_a}{\sqrt{1-m_a^2}}\right )^2e^{-\frac{\g_i^2}{2}}d\g_i\right )
\\+\frac{1-m_a}{2}\left ( \frac{1}{\sqrt{2\pi}}\int_{-\frac{\kappa+vm_a}{\sqrt{1-m_a^2}}}^{\infty}
\left (-\g_i-\frac{\kappa+vm_a}{\sqrt{1-m_a^2}}\right )^2e^{-\frac{\g_i^2}{2}}d\g_i \right ) ).\label{eq:fgarcor}
\end{multline}
As earlier, since optimal $v$ will concentrate one also has
\begin{equation}
P\left (\sqrt{\min_v(\sum_{i=1}^{m}(\min\{\g_i\q_i-\frac{\kappa-vm_a\q_i}{\sqrt{1-m_a^2}},0\})^2)}<(1+\epsilon_{1}^{(m,cor)})\sqrt{mf_{gar}^{(cor)}(\kappa)}\right )\geq 1-e^{-\epsilon_{2}^{(m,cor)} m},\label{eq:devgubcor}
\end{equation}
where $\epsilon_{1}^{(m,cor)}>0$ is an arbitrarily small constant and analogously as above $\epsilon_{2}^{(m,cor)}$ is a constant dependent on $\epsilon_{1}^{(m,cor)}$ and
$f_{gar}^{(cor)}(\kappa)$ but independent of $n$.

Now, we will look at $f(\h,v)$. Assuming that $v$ is a constant independent of $n$, from (\ref{eq:deffhvcor}) we have
\begin{equation}
f(\h,v)=\min_{\1^T\x=v,\|\x\|_2\leq 1}\h^T\x=\min_{\gamma_1}\|\h+\gamma_1\1\|_2-\gamma_1v.\label{eq:deffhvcor1}
\end{equation}
After solving over $\gamma_1$ we further have
\begin{equation}
E\gamma_1\approx\frac{v}{\sqrt{n-v^2}}.\label{eq:deffhvcor2}
\end{equation}
Moreover, $\gamma_1$ concentrates around $E\gamma_1$ with overwhelming probability and one than from (\ref{eq:deffhvcor1}) has that
\begin{equation}
\lim_{n\rightarrow \infty}\frac{Ef(\h,v)}{\sqrt{n}}=1,\label{eq:deffhvcor3}
\end{equation}
and $f(\h,v)$ concentrates around its mean with overwhelming probability, i.e.
\begin{equation}
P(f(\h,v)>(1-\epsilon_{1}^{(n,cor)})\sqrt{n})\geq 1-e^{-\epsilon_{2}^{(n,cor)} n},\label{eq:devhubcor}
\end{equation}
where $\epsilon_{1}^{(n,cor)}>0$ is an arbitrarily small constant and $\epsilon_{2}^{(n,cor)}$ is a constant dependent on $\epsilon_{1}^{(n,cor)}$ and $v$ but independent of $n$.

Then a combination of (\ref{eq:negprobanal4ubcor}), (\ref{eq:devgubcor}), and (\ref{eq:devhubcor}) gives
\begin{multline}
P(\min_{\|\lambda\|_2\leq 1,\lambda_i\geq 0,\lambda\neq 0}(f(\h,v)\|\lambda\|_2+\g^T\mbox{diag}(\q)\lambda-\epsilon_{5}^{(g)}\sqrt{n}\|\lambda\|_2-\kappa\lambda^T\1)> 0)\\\geq
(1-e^{-\epsilon_{2}^{(m,cor)} m})(1-e^{-\epsilon_{2}^{(n,cor)} n})
P((1-\epsilon_{1}^{(n,cor)})\sqrt{n}-(1+\epsilon_{1}^{(m,cor)})\sqrt{mf_{gar}^{(cor)}(\kappa)}-\epsilon_{5}^{(g)}\sqrt{n}> 0).
\label{eq:negprobanal22ubcor}
\end{multline}
If
\begin{eqnarray}
& & (1-\epsilon_{1}^{(n,cor)})\sqrt{n}- (1+\epsilon_{1}^{(m,cor)})\sqrt{mf_{gar}^{(cor)}(\kappa)}-\epsilon_{5}^{(g)}\sqrt{n}>0\nonumber \\
& \Leftrightarrow & (1-\epsilon_{1}^{(n,cor)})-(1+\epsilon_{1}^{(m,cor)})\sqrt{\alpha f_{gar}^{(cor)}(\kappa)}-\epsilon_{5}^{(g)}>0,\label{eq:negcondxipuubcor}
\end{eqnarray}
one then has from (\ref{eq:negprobanal22ubcor})
\begin{equation}
\lim_{n\rightarrow\infty}P(\min_{\|\lambda\|_2\leq 1,\lambda_i\geq 0,\lambda\neq 0}(f(\h,v)\|\lambda\|_2+\g^T\mbox{diag}(\q)\lambda-\epsilon_{5}^{(g)}\sqrt{n}\|\lambda\|_2-\frac{\kappa\lambda^T\1-vm_a\lambda^T\q}{\sqrt{1-m_a^2}})> 0)\geq 1.\label{eq:negprobanal33ubcor}
\end{equation}
A combination of  (\ref{eq:negprobanal1ubcor}),  (\ref{eq:negprobanal2ubcor}),  (\ref{eq:negprobanal3ubcor}), and  (\ref{eq:negprobanal33ubcor}) gives that if (\ref{eq:negcondxipuubcor}) holds then
\begin{equation}
\hspace{-.5in}\lim_{n\rightarrow\infty}P(\min_{\|\lambda\|_2\leq 1,\lambda_i\geq 0,\lambda\neq 0}\max_{\1^T\x=v,\|\x\|_2\leq 1}(\|\x\|_2\g^T\mbox{diag}(\q)\lambda+\|\lambda\|_2\h^T\x-\frac{\kappa\lambda^T\1-vm_a\lambda^T\q}{\sqrt{1-m_a^2}}-\epsilon_{5}^{(g)}\sqrt{n}\|\lambda\|_2\|\x\|_2)> 0)\geq 1.\label{eq:negprobanal44ubcor}
\end{equation}

We will now look at the left-hand side of the inequality in (\ref{eq:negproblemmaubcor}). The following is then the probability of interest
\begin{equation}
P(\min_{\|\lambda\|_2\leq 1,\lambda_i\geq 0}\max_{\1^T\x=v,\|\x\|_2\leq 1}(\lambda^T \mbox{diag}(\q) H\x-\frac{\kappa\lambda^T\1-vm_a\lambda^T\q}{\sqrt{1-m_a^2}}+(g-\epsilon_{5}^{(g)}\sqrt{n})\|\lambda\|_2\|\x\|_2)\geq 0).\label{eq:leftnegprobanal0ubcor}
\end{equation}
Since $P(g\geq\epsilon_{5}^{(g)}\sqrt{n})<e^{-\epsilon_{6}^{(g)} n}$ (where $\epsilon_{6}^{(g)}$ is, as all other $\epsilon$'s in this paper are, independent of $n$) from (\ref{eq:leftnegprobanal0ubcor}) we have
\begin{multline}
P(\min_{\|\lambda\|_2\leq 1,\lambda_i\geq 0}\max_{\1^T\x=v,\|\x\|_2\leq 1}(\lambda^T\mbox{diag}(\q)H\x-\frac{\kappa\lambda^T\1-vm_a\lambda^T\q}{\sqrt{1-m_a^2}}+(g-\epsilon_{5}^{(g)}\sqrt{n})\|\lambda\|_2\|\x\|_2)\geq 0)\\\leq P(\min_{\|\lambda\|_2\leq 1,\lambda_i\geq 0}\max_{\1^T\x=v,\|\x\|_2\leq 1}(\lambda^T \mbox{diag}(\q) H\x-\frac{\kappa\lambda^T\1-vm_a\lambda^T\q}{\sqrt{1-m_a^2}})\geq 0)+e^{-\epsilon_{6}^{(g)} n}.\label{eq:leftnegprobanal1ubcor}
\end{multline}
When $n$ is large from (\ref{eq:leftnegprobanal1ubcor}) we then have
\begin{multline}
\hspace{-.7in}\lim_{n\rightarrow \infty}P(\min_{\|\lambda\|_2\leq 1,\lambda_i\geq 0}\max_{\1^T\x=v,\|\x\|_2\leq 1}(\lambda^T \mbox{diag}(\q) H\x-\frac{\kappa\lambda^T\1-vm_a\lambda^T\q}{\sqrt{1-m_a^2}}+(g-\epsilon_{5}^{(g)}\sqrt{n})\|\lambda\|_2\|\x\|_2)\geq 0)\\\leq \lim_{n\rightarrow \infty}P(\min_{\|\lambda\|_2\leq 1,\lambda_i\geq 0}\max_{\1^T\x=v,\|\x\|_2\leq 1}(\lambda^T \mbox{diag}(\q)H\x-\frac{\kappa\lambda^T\1-vm_a\lambda^T\q}{\sqrt{1-m_a^2}})\geq 0).\label{eq:leftnegprobanal2ubcor}
\end{multline}
Assuming that (\ref{eq:negcondxipuubcor}) holds, then a combination of (\ref{eq:cormaxmin}), (\ref{eq:negproblemmaubcor}), (\ref{eq:negprobanal44ubcor}), and (\ref{eq:leftnegprobanal2ubcor}) gives
\begin{multline}
\lim_{n\rightarrow \infty}P(\xi_{nrcor}\leq 0)  =  \lim_{n\rightarrow \infty}P(-\xi_{nrcor}\geq 0)\\
 =  \lim_{n\rightarrow \infty}P(\min_{\|\lambda\|_2\leq 1,\lambda_i\geq 0}\max_{\1^T\x=v,\|\x\|_2\leq 1}(\lambda^T\mbox{diag}(\q)H\x-\frac{\kappa\lambda^T\1-vm_a\lambda^T\q}{\sqrt{1-m_a^2}})\geq 0)
\\
\geq
\lim_{n\rightarrow \infty}P(\min_{\|\lambda\|_2\leq 1,\lambda_i\geq 0}\max_{\1^T\x=v,\|\x\|_2\leq 1}(\lambda^T\mbox{diag}(\q)H\x-\frac{\kappa\lambda^T\1-vm_a\lambda^T\q}{\sqrt{1-m_a^2}}+(g-\epsilon_{5}^{(g)}\sqrt{n})\|\lambda\|_2\|\x\|_2)\geq 0)\\
\hspace{-.5in} \geq
\lim_{n\rightarrow\infty}P(\min_{\|\lambda\|_2\leq 1,\lambda_i\geq 0,\lambda\neq 0}\max_{\1^T\x=v,\|\x\|_2\leq 1}(\|\x\|_2\g^T\mbox{diag}(\q)\lambda+\|\lambda\|_2\h^T\x-\frac{\kappa\lambda^T\1-vm_a\lambda^T\q}{\sqrt{1-m_a^2}}-\epsilon_{5}^{(g)}\sqrt{n}\|\lambda\|_2\|\x\|_2)> 0)
\geq  1.\\\label{eq:leftnegprobanal3ubcor}
\end{multline}
From (\ref{eq:leftnegprobanal3ubcor}) one then has
\begin{equation}
\lim_{n\rightarrow \infty}P(\xi_{nrcor}> 0)=1-\lim_{n\rightarrow \infty}P(\xi_{nrcor}\leq 0)\leq 0,\label{eq:leftnegprobanal4ubcor}
\end{equation}
which implies that (\ref{eq:defprobcor1ub}) is feasible with overwhelming probability if (\ref{eq:negcondxipuubcor}) holds.

We summarize our results from this subsection in the following lemma.

\begin{lemma}
Let $H$ be an $m\times n$ matrix with i.i.d. standard normal components. Further, let $m_a$ be a constant such that $m_a\in[0,1]$ and let $\q$ be an $m\times 1$ vector of i.i.d. asymmetric Bernoulli random variables defined in the following way:
\begin{eqnarray}
P(\q_i=1) & = & \frac{1+m_a}{2}\nonumber \\
P(\q_i=-1) & = & \frac{1-m_a}{2}.\label{eq:defqlemmacorub}
\end{eqnarray}
Let $n$ be large and let $m=\alpha n$, where $\alpha>0$ is a constant independent of $n$. Let $\xi_{nrcor}$ be as in (\ref{eq:cormaxmin}). Let all $\epsilon$'s be arbitrarily small constants independent of $n$. Further, let $\g_i$ be a standard normal random variable and set
\begin{multline}
f_{gar}^{(cor)}(\kappa)=\min_v( \frac{1+m_a}{2}\left ( \frac{1}{\sqrt{2\pi}}\int_{-\infty}^{\frac{\kappa-vm_a}{\sqrt{1-m_a^2}}}
\left (\g_i-\frac{\kappa-vm_a}{\sqrt{1-m_a^2}}\right )^2e^{-\frac{\g_i^2}{2}}d\g_i\right )
\\+\frac{1-m_a}{2}\left ( \frac{1}{\sqrt{2\pi}}\int_{-\frac{\kappa+vm_a}{\sqrt{1-m_a^2}}}^{\infty}
\left (-\g_i-\frac{\kappa+vm_a}{\sqrt{1-m_a^2}}\right )^2e^{-\frac{\g_i^2}{2}}d\g_i \right ) ).
\label{eq:fgarlemmacorub}
\end{multline}
Further, let $v_{opt}$
\begin{multline}
v_{opt}=\mbox{argmin}_v( \frac{1+m_a}{2}\left ( \frac{1}{\sqrt{2\pi}}\int_{-\infty}^{\frac{\kappa-vm_a}{\sqrt{1-m_a^2}}}
\left (\g_i-\frac{\kappa-vm_a}{\sqrt{1-m_a^2}}\right )^2e^{-\frac{\g_i^2}{2}}d\g_i\right )
\\+\frac{1-m_a}{2}\left ( \frac{1}{\sqrt{2\pi}}\int_{-\frac{\kappa+vm_a}{\sqrt{1-m_a^2}}}^{\infty}
\left (-\g_i-\frac{\kappa+vm_a}{\sqrt{1-m_a^2}}\right )^2e^{-\frac{\g_i^2}{2}}d\g_i \right ) ).
\label{eq:voptlemmacorub}
\end{multline}
Then, if $\kappa\geq v_{opt}m_a$
\begin{equation}
 \lim_{n\rightarrow\infty}P(-\xi_{nrcor}\geq 0)=\lim_{n\rightarrow\infty}P(\min_{\1^T\x=v,\|\x\|_2=1}\max_{\|\lambda\|_2=1,\lambda_i\geq 0}(\frac{\kappa\lambda^T\1-vm_a\lambda^T\q}{\sqrt{1-m_a^2}}-\lambda^T \mbox{diag}(\q)H\x)\geq 0)\geq 1. \label{eq:neggenproblemmacorub}
\end{equation}
\label{lemma:neggenlemmaubcor}
\end{lemma}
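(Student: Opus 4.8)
The plan is to transcribe the argument of Subsection~\ref{sec:uncorgardub}, now carrying along the extra affine constraint $\1^T\x=v$ and the factor $\mbox{diag}(\q)$. First I would reduce to a pure feasibility claim: since $\kappa\geq v_{opt}m_a$, the norm-relaxed problem (\ref{eq:defprobcor1ub}) is equivalent to the original (\ref{eq:defprobcor1})---a feasible point of the former with $\|\x\|_2=t<1$ can be rescaled to $\|\x\|_2=1$, and the condition $\kappa\geq v_{opt}m_a$ is exactly what keeps this rescaling admissible at the relevant value $v=v_{opt}$---so it suffices to prove $\lim_nP(\xi_{nrcor}\leq 0)=\lim_nP(-\xi_{nrcor}\geq 0)=1$. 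By the duality identity (\ref{eq:cormaxminneg}) this probability equals the probability that $\min_{\|\lambda\|_2\leq1,\lambda_i\geq0}\max_{\1^T\x=v,\|\x\|_2\leq1}(\lambda^T\mbox{diag}(\q)H\x-\frac{\kappa\lambda^T\1-vm_a\lambda^T\q}{\sqrt{1-m_a^2}})\geq 0$.

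Next I would introduce the single auxiliary Gaussian $g$ through the cushion $\epsilon_5^{(g)}\sqrt n\|\lambda\|_2\|\x\|_2$, set $\zeta_\lambda=\frac{\kappa\lambda^T\1-vm_a\lambda^T\q}{\sqrt{1-m_a^2}}+\epsilon_5^{(g)}\sqrt n\|\lambda\|_2\|\x\|_2$, and apply the Gordon comparison of Lemma~\ref{lemma:negproblemmaubcor}; the variance bookkeeping there is unaffected by $\mbox{diag}(\q)$ because $\|\lambda^T\mbox{diag}(\q)\|_2=\|\lambda\|_2$ (as $\q_i^2=1$), just as noted for Lemma~\ref{lemma:negproblemmacor}. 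On the right-hand (auxiliary) side I would carry out the inner $\max$ over $\x$ subject to $\1^T\x=v$, $\|\x\|_2\leq1$: the freedom in the length $\|\x\|_2$ produces the $\max(0,\cdot)$ appearing in (\ref{eq:negprobanal1ubcor}) together with the scalar $f(\h,v)=\min_{\1^T\x=v,\|\x\|_2\leq1}\h^T\x$, while the outer $\min$ over $\lambda$ factors out $\|\lambda\|_2$, leaving $f(\h,v)$ competing with $\|(\mbox{diag}(\q)\g-\mbox{threshold})_-\|_2$. Two concentration inputs then close this side: first, $f(\h,v)/\sqrt n\to 1$ with exponentially small failure probability, obtained from the $\gamma_1$-dual (\ref{eq:deffhvcor1}) and the concentration of $\gamma_1$ about $v/\sqrt{n-v^2}$ as in (\ref{eq:deffhvcor2})--(\ref{eq:devhubcor}); second, the identity (\ref{eq:efgarubcor}), namely $\min_v\frac{1}{m}E\sum_{i=1}^m(\min\{\g_i\q_i-\frac{\kappa-vm_a\q_i}{\sqrt{1-m_a^2}},0\})^2=f_{gar}^{(cor)}(\kappa)$, with concentration of the sum and of the minimizing $v$ about $v_{opt}$ as in (\ref{eq:devgubcor})---here the expectation over $\q_i=\pm1$ splits into two Bernoulli branches with weights $\frac{1\pm m_a}{2}$, giving the two Gaussian integrals that define $f_{gar}^{(cor)}(\kappa)$ in (\ref{eq:fgarlemmacorub}). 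Under hypothesis (\ref{eq:negcondxipuubcor}) (informally $\alpha<1/f_{gar}^{(cor)}(\kappa)$) these inputs give $(1-\epsilon_1^{(n,cor)})\sqrt n-(1+\epsilon_1^{(m,cor)})\sqrt{mf_{gar}^{(cor)}(\kappa)}-\epsilon_5^{(g)}\sqrt n>0$, hence the right-hand probability tends to $1$, which is (\ref{eq:negprobanal44ubcor}).

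Finally I would transfer back to the left-hand side of Lemma~\ref{lemma:negproblemmaubcor}: since $P(g\geq\epsilon_5^{(g)}\sqrt n)\leq e^{-\epsilon_6^{(g)}n}$, the term $(g-\epsilon_5^{(g)}\sqrt n)\|\lambda\|_2\|\x\|_2$ may be dropped up to an additive $e^{-\epsilon_6^{(g)}n}$ error, so chaining Lemma~\ref{lemma:negproblemmaubcor}, (\ref{eq:negprobanal44ubcor}) and (\ref{eq:leftnegprobanal2ubcor}) exactly as in (\ref{eq:leftnegprobanal3ubcor}) yields $\lim_nP(-\xi_{nrcor}\geq 0)=1$, i.e., (\ref{eq:neggenproblemmacorub}); then $\lim_nP(\xi_{nrcor}>0)=1-\lim_nP(\xi_{nrcor}\leq0)\leq 0$, so (\ref{eq:defprobcor1ub}), and therefore (\ref{eq:defprobcor1}), is feasible with overwhelming probability.

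The step I expect to be the main obstacle is the coupled optimization over $v=\1^T\x$, which has no analogue in the uncorrelated case: one must check that the inner $\max$ over $\x$ genuinely collapses to the clean scalar $f(\h,v)$ uniformly over $\lambda$ on the sphere and over the admissible range of $\|\x\|_2$, that the optimal $v$ concentrates on the deterministic $v_{opt}$ so that $f_{gar}^{(cor)}(\kappa)$ evaluated at its own minimizing $v$ really governs the threshold, and that the hypothesis $\kappa\geq v_{opt}m_a$ is precisely what both legitimizes the relaxation to $\|\x\|_2\leq1$ and makes the negative-part truncation in (\ref{eq:fgarlemmacorub}) the correct one. A secondary, deliberately postponed, point is the Gaussian-for-Bernoulli substitution for the entries of $H$ after conditioning on $\q$, which the paper defers to routine Lindeberg/Chatterjee-type arguments.
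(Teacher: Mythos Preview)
Your proposal is correct and follows essentially the same approach as the paper: you apply the Gordon-type comparison of Lemma~\ref{lemma:negproblemmaubcor} with the same choice of $\zeta_\lambda$, reduce the auxiliary side to $f(\h,v)$ versus $\|(\mbox{diag}(\q)\g-\mbox{threshold})_-\|_2$, invoke the concentration inputs (\ref{eq:devgubcor}) and (\ref{eq:devhubcor}), and then transfer back via the tail bound on $g$ exactly as in (\ref{eq:leftnegprobanal3ubcor}). Your identification of the two delicate points---the concentration of the optimal $v$ at $v_{opt}$ and the role of the hypothesis $\kappa\geq v_{opt}m_a$ in legitimizing the relaxation $\|\x\|_2\leq 1$---matches precisely where the paper's argument departs from the uncorrelated template.
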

\begin{proof}
The proof follows from the above discussion, (\ref{eq:negproblemmaubcor}), and (\ref{eq:leftnegprobanal3ubcor}).
\end{proof}
One again can be a bit more informal and (essentially ignoring all technicalities and $\epsilon$'s) have that as long as
\begin{equation}
\alpha<\frac{1}{f_{gar}^{(cor)}(\kappa)},\label{eq:condalphacorub}
\end{equation}
the problem in (\ref{eq:defprobcor1}) will be feasible with overwhelming probability. Also, the above lemma establishes in a mathematically rigorous way that the critical storage capacity is indeed upper bounded as predicted in \cite{Gar88}. Moreover, if $\kappa$ is as specified in Lemma \ref{lemma:negproblemmaubcor} then combining results of Lemmas \ref{lemma:neggenlemmalbcor} and \ref{lemma:neggenlemmaubcor} one obtains (of course in an informal language) for the storage capacity $\alpha_c$
\begin{equation}
\alpha_c=\frac{1}{f_{gar}^{(cor)}(\kappa)}.\label{eq:stcapposkcor}
\end{equation}
The value obtained for the storage capacity in (\ref{eq:stcapposkcor}) matches the one obtained in \cite{Gar88} while utilizing the replica approach. Below in Figures \ref{fig:alfackappa} and \ref{fig:kappakappaadj} we show how the storage capacity changes as a function of $\kappa$. More specifically, in Figure \ref{fig:alfackappa} we show how $\alpha_c$ changes as a function of $\kappa$ for three different values of correlating parameter $m_a$. Namely, we look at a the uncorrelated case $m_a=0$ and two correlated cases, $m_a=0.5$ and $m_a=0.8$. In Figure \ref{fig:kappakappaadj} we present how $\kappa_{adj}=\frac{\kappa-v_{opt}m_a}{\sqrt{1-m_a^2}}$ changes as a function of $\kappa$. Using the condition $\kappa_{adj}=0$ (which is essentially the same as $\kappa=v_{opt}m_a$) we obtain a critical value for $\kappa$, $\kappa^{(c)}$, so that the lower bound given in Lemma \ref{lemma:neggenlemmaubcor} holds (as in Section \ref{sec:uncorgard}, the upper bound given in Lemma \ref{lemma:neggenlemmalbcor} holds for any $\kappa$). These critical values are shown in Figure \ref{fig:kappakappaadj} as well together with the corresponding values for the storage capacity. These values are also presented in Figure \ref{fig:alfackappa}, which then essentially establishes the curves given in Figure \ref{fig:alfackappa} as the exact storage capacity values in the regimes to the left of the vertical bars and as rigorous upper bounds in the regime to the right of the vertical bars.

\begin{figure}[htb]
\centering
\centerline{\epsfig{figure=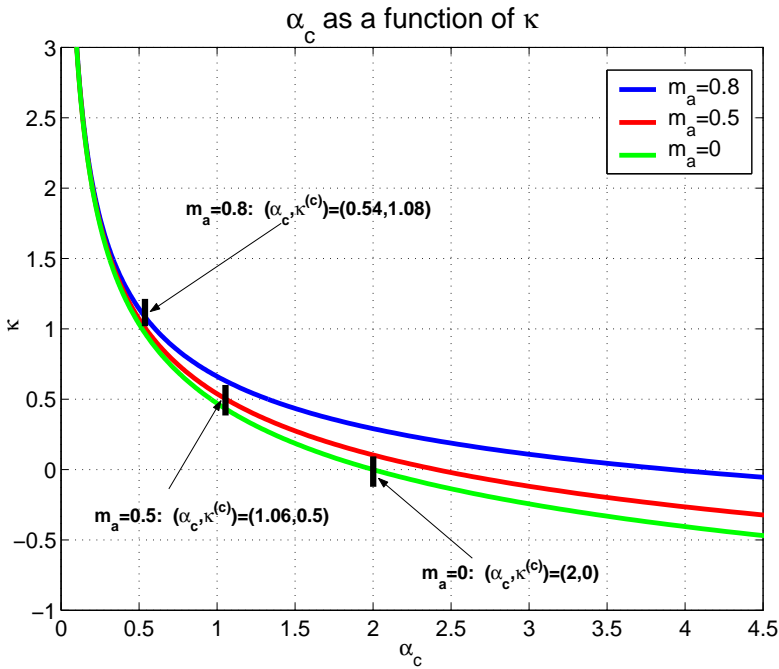,width=10.5cm,height=9cm}}
\caption{$\alpha_c$ as a function of $\kappa$}
\label{fig:alfackappa}
\end{figure}

\begin{figure}[htb]
\centering
\centerline{\epsfig{figure=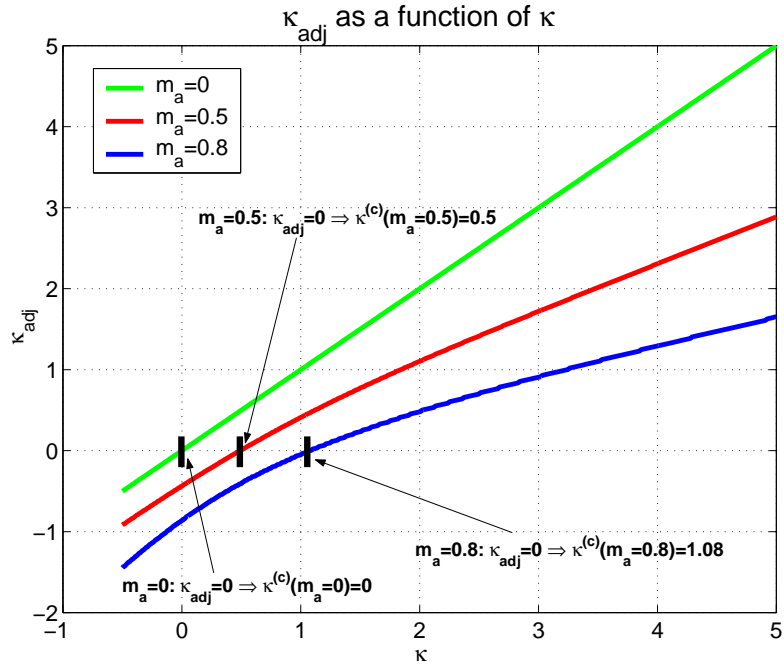,width=10.5cm,height=9cm}}
\caption{$\kappa_{adj}$ as a function of $\kappa$}
\label{fig:kappakappaadj}
\end{figure}

\section{Conclusion}
\label{sec:conc}

In this paper we revisited the so-called Gardner problem. The problem is one of the most fundamental/well-known feasibility problems and appears in a host areas, statistical physics, neural networks, integral geometry, combinatorics, to name a few. Here, we were interested in the so-called random spherical variant of the problem, often referred to as the random spherical perceptron.

Various features of the Gardner problem are typically of interest. We presented a framework that can be used to analyze the problem with pretty much all of its features. To give an idea how the framework works in practice we chose one of the perceptron features, called the storage capacity and analyzed it in details. We provided rigorous mathematical results for the values of storage capacity for certain range of parameters. We also proved that the results that we obtained are rigorous upper bounds on the storage capacity in the entire range of the thresholding parameter $\kappa$.

In addition to the standard uncorrelated version of the Gardner problem we also considered the correlated version and provided a set of results similar to those that we provided in the uncorrelated case. We again proved that the predictions obtained in \cite{Gar88} based on the statistical physics replica approach are at the very least the upper bounds for the value of the storage capacity and in certain range of the thresholding parameter actually the exact values of the storage capacity.

To maintain the easiness of the exposition throughout the paper we presented a collection of theoretical results for a particular type of randomness, namely the standard normal one. However, as was the case when we studied the Hopfield models in \cite{StojnicHopBnds10,StojnicMoreSophHopBnds10}, all results that we presented for the uncorrelated case can easily be extended to cover a wide range of other types of randomness. There are many ways how this can be done (and the rigorous proofs are not that hard either). Typically they all would boil down to a repetitive use of the central limit theorem. For example, a particularly simple and elegant approach would be the one of Lindeberg \cite{Lindeberg22}. Adapting our exposition to fit into the framework of the Lindeberg principle is relatively easy and in fact if one uses the elegant approach of \cite{Chatterjee06} pretty much a routine. However, as we mentioned when studying the Hopfield model \cite{StojnicHopBnds10}, since we did not create these techniques we chose not to do these routine generalizations. On the other hand, to make sure that the interested reader has a full grasp of a generality of the results presented here, we do emphasize again that pretty much any distribution that can be pushed through the Lindeberg principle would work in place of the Gaussian one that we used. When it comes to the correlated case the results again hold for a wide range of randomness, however one has to carefully account for the asymmetry of the problem.

It is also important to emphasize that we in this paper presented a collection of very simple observations. In fact the results that we presented are among the most fundamental ones when it comes to the spherical perceptron. There are many so to say more advanced features of the spherical perceptron that can be handled with the theory that we presented here. More importantly, we should emphasize that in this and a few companion papers we selected problems that we considered as classical and highly influential and chose to present the mechanisms that we developed through their analysis. Of course, a fairly advanced theory of neural networks has been developed over the years. The concepts that we presented here we were also able to use to analyze many (one could say a bit more modern) other problems within that theory (for example, various other dynamics can be employed, more advanced different network structures have been proposed and can be analyzed, and so on). However, we thought that before presenting how the mechanisms we created work on more modern problems, it would be in a sense respectful towards the early results created a few decades ago to first introduce our concepts through the classical problems. We will present many other results that we were able to obtain elsewhere.

\begin{singlespace}
\bibliographystyle{plain}
\bibliography{GardGenRefs}
\end{singlespace}

\end{document}